\documentclass[12pt]{amsart}
\usepackage{amssymb,latexsym, array, pictex}
\usepackage{verbatim,euscript}
\usepackage{fullpage,color}

\sloppy

\usepackage[colorlinks=true,linkcolor=blue,urlcolor=my_color,citecolor=magenta]{hyperref}

\definecolor{my_color}{rgb}{0,0.5,0.5}
\definecolor{MIXT}{rgb}{0.4,0.3,0.6}

\input {cyracc.def}
\numberwithin{equation}{section}

\tolerance=4000


\newtheorem{thm}{Theorem}[section]
\newtheorem{lm}[thm]{Lemma} 

\newtheorem{prop}[thm]{Proposition}

\theoremstyle{remark}

\newtheorem{rmk}[thm]{Remark}

\theoremstyle{definition}
\newtheorem{ex}[thm]{Example}

\newtheorem*{rema}{Remark}




\newcommand {\be}{{\mathfrak b}}

\newcommand {\g}{{\mathfrak g}}

\newcommand {\el}{{\mathfrak l}}

\newcommand {\p}{{\mathfrak p}}
\newcommand {\q}{{\mathfrak q}}

\newcommand {\te}{{\mathfrak t}}
\newcommand {\ut}{{\mathfrak u}}

\newcommand {\gln}{{\mathfrak{gl}}_n}
\newcommand {\sln}{{\mathfrak{sl}}_n}
\newcommand {\sltn}{{\mathfrak{sl}}_{2n}}
\newcommand {\gltn}{{\mathfrak{gl}}_{2n}}

\newcommand {\spn}{{\mathfrak{sp}}_{2n}}
\newcommand {\sono}{{\mathfrak{so}}_{2n+1}}




\newcommand {\ind}{{\mathsf{ind\,}}}
\newcommand {\Lie}{{\mathrm{Lie\,}}}

\newcommand {\rk}{{\mathsf{rk\,}}}

\newcommand {\un}{\underline}
\newcommand {\GR}[2]{{\textrm{{\sf #1}}}_{#2}}
\newcommand {\beq}{\begin{equation}}
\newcommand {\eeq}{\end{equation}}

\newcommand {\bbk}{\Bbbk}


\newcommand{\gt}{\mathfrak}



\newcommand {\ca}{{\mathcal A}}

\newcommand {\BN}{{\mathbb N}}

\newcommand{\eus}{\EuScript}

\renewcommand{\le}{\leqslant}
\renewcommand{\ge}{\geqslant}

\newfam\eusfam%
\font\euszw=eusm10 scaled 1200%
\font\eusac=eusm7 scaled 1200%
\font\eusacc=eusm7 scaled 1000%
\textfont\eusfam=\euszw\scriptfont\eusfam=\eusac%
\scriptscriptfont\eusfam=\eusacc%
%

\begin{document}
\hfill {\scriptsize July 29, 2016}
\vskip1ex

\title{On seaweed subalgebras and meander graphs in type {\sf C}}
\author[D.\,Panyushev]{Dmitri I.~Panyushev}
\address[D.P.]{Institute for Information Transmission Problems of the Russian Academy of Sciences, 
Bolshoi Karetnyi per. 19,  Moscow 127051, Russia}
\email{panyushev@iitp.ru}
\author[O.\,Yakimova]%
{Oksana S.~Yakimova}
\address[O.Y.]{Institut f\"ur Mathematik, Friedrich-Schiller-Universit\"at Jena,  D-07737 Jena, 
Deutschland}
\email{oksana.yakimova@uni-jena.de}
\thanks{The first author gratefully acknowledges the hospitality of MPIM (Bonn) during the preparation of the article. The second author is partially supported by the DFG priority programme SPP 1388 
``Darstellungstheorie" and the Graduiertenkolleg GRK 1523 ``Quanten- und Gravitationsfelder".}

\subjclass[2010]{17B08, 17B20}
\maketitle

\begin{abstract}
In 2000, Dergachev and Kirillov introduced subalgebras of "seaweed type" in $\gt{gl}_n$(or $\gt{sl}_n$) and computed their index using certain graphs. In this article, those 
graphs are called type-{\sf A} meander graphs. Then the subalgebras of seaweed type, or just  
"seaweeds", have been defined by Panyushev (2001) for arbitrary simple Lie algebras.
Namely, if $\mathfrak p_1,\mathfrak p_2\subset\mathfrak g$ are parabolic subalgebras such that $\mathfrak p_1+\mathfrak p_2=\mathfrak g$, then $\mathfrak q=\mathfrak p_1\cap\mathfrak p_2$ is a seaweed in $\mathfrak g$. 
If $\mathfrak p_1$ and $\mathfrak p_2$ are ``adapted'' to a fixed triangular decomposition of $\mathfrak g$, then $\mathfrak q$ is said to be standard. The number of standard seaweeds is finite.
A general algebraic formula for the index of seaweeds has been proposed 
by Tauvel and Yu (2004) and then proved by Joseph (2006).

In this paper, elaborating on the ``graphical'' approach of Dergachev and Kirillov, we introduce the type-{\sf C} meander graphs, i.e., the graphs associated with the standard seaweed subalgebras of $\gt{sp}_{2n}$, and give a formula for the index in terms of these graphs. We also note that the very same graphs can be used in case of the odd orthogonal Lie algebras. 

Recall that $\mathfrak q$ is called Frobenius, if the index of $\mathfrak q$ equals 0. 
We  provide several applications of our formula to Frobenius seaweeds in $\gt{sp}_{2n}$. In particular, using a natural partition of the set 
$\boldsymbol{\eus {F}}_n$ of standard Frobenius seaweeds, we prove that $\#\boldsymbol{\eus {F}}_n$ strictly increases for the passage from $n$ to $n+1$. The similar monotonicity question is open for the standard Frobenius seaweeds in $\gt{sl}_n$, even for the passage from $n$ to $n+2$.
\end{abstract}

\section{Introduction}   
\label{sect:intro}

\noindent 
The index of an (algebraic) Lie algebra $\q$, $\ind\q$, is the minimal dimension of the stabilisers for the 
coadjoint representation of $\q$. It  can be regarded as a generalisation of the notion of rank. That is, 
$\ind\q$ equals the rank of $\q$, if $\q$ is reductive. In \cite{dk00}, the index of the subalgebras of 
"seaweed type" in $\gln$ (or $\sln$) has been computed using certain {graphs}. In this article, those 
graphs are called {\it type-{\sf A} meander  graphs}. Then the subalgebras of seaweed type, or just  
{\it seaweeds}, have been defined and studied for an  arbitrary simple Lie algebra $\g$~\cite{Dima01}. 
Namely, if $\p_1,\p_2\subset \g$ are parabolic subalgebras such that $\p_1+\p_2=\g$, then 
$\q=\p_1\cap\p_2$ is a seaweed in $\g$. If $\p_1$ and $\p_2$ are ``adapted'' to a fixed 
triangular decomposition of $\g$, then $\q$ is said to be standard, see Section~\ref{sect:generalities} for 
details. A general algebraic formula for the index of seaweeds has been proposed 
in~\cite[Conj.\,4.7]{ty-AIF} and then proved in~\cite[Section\,8]{jos}.

In this paper, elaborating on the ``graphical'' approach of \cite{dk00}, we introduce the {\it type-{\sf C} 
meander  graphs}, i.e., the graphs associated with the standard seaweed subalgebras of $\spn$, and give a formula for the index 
in terms of these graphs.  
Although the seaweeds in $\spn$ are our primary object in Sections~\ref{sect:generalities}--\ref{sect:applic} , we note that the very same graphs can be used in case of the odd orthogonal Lie algebras, see Section~\ref{sect:odd-orth}. 

Recall that $\q$ is called {\it Frobenius}, if $\ind\q=0$. Frobenius Lie algebras are very important in mathematics, because of their connection with the Yang-Baxter equation. We  provide some applications of our formula to Frobenius seaweeds in $\spn$. Let $\boldsymbol{\eus F}_n$ denote the set of standard Frobenius seaweeds of $\spn$.
For a natural partition $\boldsymbol{\eus F}_n=\bigsqcup_{k=1}^n \boldsymbol{\eus F}_{n,k}$ (see Section~\ref{sect:applic} for details), we construct the 
embeddings $\boldsymbol{\eus F}_{n,k}\hookrightarrow \boldsymbol{\eus F}_{n+1,k+1}$ for all $n,k\ge 1$.
Since $\boldsymbol{\eus F}_{n+1,1}$ does not meet the image of the induced embedding 
$\boldsymbol{\eus F}_n\hookrightarrow \boldsymbol{\eus F}_{n+1}$ and $\#(\boldsymbol{\eus F}_{n+1,1})>0$, this implies that $\#(\boldsymbol{\eus F}_{n})< \#(\boldsymbol{\eus F}_{n+1})$.
The similar monotonicity question is open for the standard Frobenius seaweeds in $\gt{sl}_n$, even for the passage from $n$ to $n+2$.
We also show that $\boldsymbol{\eus F}_{n,1}$ and $\boldsymbol{\eus F}_{n,2}$ are related to certain Frobenius seaweeds in $\sln$.

The ground field is algebraically closed and of characteristic zero. 

\section{Generalities on seaweed subalgebras and meander graphs}   
\label{sect:generalities}

\noindent 
Let $\p_1$ and $\p_2$ be two parabolic subalgebras of a simple Lie algebra $\g$.
If $\p_1 +\p_2=\g$, then $\p_1\cap\p_2$ is called a {\it seaweed subalgebra\/} or just {\it seaweed\/} in $\g$ (see \cite{Dima01}).
The set of seaweeds includes all parabolics (if $\p_2=\g$), all Levi subalgebras (if $\p_1$ and $\p_2$ are opposite), and many interesting non-reductive subalgebras.
We assume that $\g$ is equipped with a fixed triangular decomposition, so 
that there are two opposite Borel subalgebras $\be$ and $\be^-$, and a Cartan subalgebra 
$\te=\be\cap\be^-$. Without loss of generality,  we may also assume that  
$\p_1\supset \be$ (i.e., $\p_1$ is standard) and $\p_2=\p_2^-\supset \be^-$ (i.e., $\p_2$ is opposite-standard). Then the seaweed $\q=\p_1\cap\p_2^-$ is
said to be {\it standard}, too.
Either of these parabolics is determined by a subset of $\Pi$, the set of simple roots associated with $(\be,\te)$. 
Therefore, a standard seaweed  is determined by two arbitrary subsets of $\Pi$, see \cite[Sect.\,2]{Dima01} for details.

For classical Lie algebras $\sln$ and  $\spn$,  we exploit the usual numbering of $\Pi$, which allows
us to identify the standard and opposite-standard parabolic subalgebras with certain compositions related to 
$n$. It is also more convenient to deal with $\gln$ in place 
of $\sln$. 

{\bf I.} \ $\g=\gln$. We work with the obvious triangular decomposition of $\gln$, where $\be$ consists of the upper-triangular matrices.
If $\p_1\supset\be$ and the standard Levi subalgebra of $\p_1$ is 
$\gt{gl}_{a_1}{\oplus}\ldots\oplus\gt{gl}_{a_s}$, then we set $\p_1=\p(\un{a})$, where $\un{a}=(a_1,a_2,\dots,a_s)$. 
Note that $a_1+\dots +a_s=n$ and all $a_i \ge 1$.
Likewise, if $\p_2^-\supset\be^-$ is represented by 
a composition $\un{b}=(b_1,\dots,b_t)$ with $\sum b_j = n$, then the standard 
seaweed $\p_1\cap\p_2^-\subset \gln$ is  
denoted by $\q^{\sf A}(\un{a}{\mid} \un{b})$.  
The corresponding type-{\sf A} meander graph $\Gamma=\Gamma^{\sf A}(\un{a}{\mid}\un{b})$ 
is defined by the following rules:
\par\textbullet\quad
$\Gamma$ has $n$ consecutive vertices on a horizontal line numbered from $1$ up to $n$.
\par\textbullet\quad The parts of $\un{a}$ determine the set of pairwise disjoint arcs (edges) that are drawn \un{\sl above\/} the horizontal line.
Namely, part $a_1$ determines $[a_1/2]$ consecutively embedded arcs above the 
nodes $1,\dots,a_1$, where the widest arc joins vertices 1 and $a_1$, the following joins $2$ and $a_1-1$, etc. If $a_1$ is odd, then the middle vertex $(a_1+1)/2$ acquires no arc at all. 
Next,  part $a_2$ determines $[a_2/2]$ embedded
arcs above the nodes $a_1+1,\dots,a_1+a_2$, etc.
\par\textbullet\quad The arcs corresponding to $\un{b}$ are drawn following the same rules,
but \un{\sl below\/} the horizontal line. 

It follows that the degree of each vertex in $\Gamma$ is at most $2$ and each connected component of 
$\Gamma$ is homeomorphic to either a circle or a segment. (An isolated vertex is also a segment!) 
By~\cite{dk00}, the index of $\q^{\sf A}(\un{a}{\mid} \un{b})$ can be computed via
$\Gamma=\Gamma^{\sf A}(\un{a}{\mid}\un{b})$ as follows: 
\beq  \label{eq:index-A}
\ind\q^{\sf A}(\un{a}{\mid} \un{b})=2{\cdot}\text{(number of cycles in $\Gamma$)} + 
\text{(number of segments in $\Gamma$)} .
\eeq
\begin{rmk} \label{rem:ind-gl-sl}
Formula~\eqref{eq:index-A} gives the index of a seaweed in $\gln$, not in $\sln$.
However, 
if $\q\subset\gln$ is a seaweed, then $\q\cap\sln$ is a seaweed in $\sln$ and the respective mapping
$\q\mapsto \q\cap\sln$ is a bijection. Here
$\q=(\q\cap\sln)\oplus (\text{1-dim centre of $\gln$})$, hence
$\ind(\q\cap\sln)=\ind\q-1$. Since $\ind\q^{\sf A}(\un{a}{\mid} \un{b})\ge 1$ and the minimal value `1' is 
achieved if and only if $\Gamma$ is a sole segment, we also obtain a characterisation of the Frobenius seaweeds in $\sln$. 
\end{rmk}
\begin{ex}   \label{ex:old-A} 
$\Gamma^{\sf A}(5,2,2{\mid}  2,4,3)$=
\setlength{\unitlength}{0.021in}
\raisebox{-12\unitlength}{%
\begin{picture}(96,30)(-5,-11)
\multiput(0,3)(10,0){9}{\circle*{2}}
\put(20,5){\oval(40,20)[t]}
\put(20,5){\oval(20,10)[t]}
\put(55,5){\oval(10,5)[t]}
\put(75,5){\oval(10,5)[t]}
\put(5,1){\oval(10,5)[b]}
\put(35,1){\oval(30,15)[b]} 
\put(35,1){\oval(10,5)[b]}
\put(70,1){\oval(20,10)[b]}
\end{picture}
}  and the index of the corresponding seaweed in $\mathfrak{gl}_9$ (resp. $\mathfrak{sl}_9$) 
equals $3$ (resp. $2$).
\end{ex}

{\bf II.} \ $\g=\spn$. 
We use the embedding $\spn\subset\gltn$ such that
\[
\spn=\left\{\begin{pmatrix} \eus A & \eus B \\ \eus C & -\eus{\hat A} \end{pmatrix} \mid \eus A,\eus B,\eus C\in\gln,
\quad \eus B=\hat{\eus B}, \eus C=\hat{\eus C}\right\} ,
\] 
where $\eus A\mapsto\hat{\eus A}$ is the transpose with respect to the antidiagonal.
If $\tilde\be\subset\gltn$ (resp.~$\tilde\be^-$) is the set of upper- (resp. lower-) triangular matrices, then
$\be=\tilde\be\cap\spn$ and $\be^-=\tilde\be^-\cap\spn$ are our fixed Borel subalgebras of $\g=\spn$.
If $\p_1\supset\be$, then the standard Levi subalgebra of $\p$ is 
$\gt{gl}_{a_1}{\oplus}\ldots\oplus\gt{gl}_{a_s}\oplus\gt{sp}_{2d}$, where $a_1+\dots +a_s+d=n$,
all $a_i\ge 1$, and $d\ge 0$. Since $d$ is determined by $n$ and the `$\mathfrak{gl}$'  parts, 
$\p_1$ can be represented by $n$ and the composition $\un{a}=(a_1,\dots,a_s)$. We write
$\p_n(\un{a})$ for it. 
Likewise, if $\p_2^-$ is represented by another composition 
$\un{b}=(b_1,\dots,b_t)$ with $\sum b_j \le n$, then  $\p_1\cap\p_2^-$ is  denoted by
$\q_n^{\sf C}(\un{a}{\mid} \un{b})$.  
To a standard parabolic $\p_1=\p_n(\un{a})\subset\spn$, one can associate the parabolic subalgebra
$\tilde\p_1\subset\gltn$ that is represented by the symmetric composition 
$\un{\tilde a}=(a_1,\dots,a_s, 2d, a_s,\dots,a_1)$ of $2n$. In the matrix form, the standard Levi 
subalgebra of $\tilde\p_1$ has the consecutive diagonal blocks $\gt{gl}_{a_1},\ldots,\gt{gl}_{a_s},\gt{gl}_{2d},
\gt{gl}_{a_s},\ldots,\gt{gl}_{a_1}$ and, for the above embedding $\spn\subset\gltn$ and compatible triangular decompositions, 
one has $\p_1=\tilde\p_1\cap\spn$ (and likewise for $\p_2^-\subset\spn$ and $\tilde\p_2^-\subset\gltn$), see \cite[Sect.\,5]{Dima01} for details. If $\un{\tilde a}$ and $\un{\tilde b}$ are symmetric compositions of $2n$,  then the seaweed
$\q^{\sf A}(\un{\tilde a}\!\mid\!\un{\tilde b})\subset\gltn$ is said to be {\it symmetric}, too. The above construction 
provides a bijection between the standard seaweeds in $\spn$ and the symmetric standard seaweeds in 
$\gltn$ (or $\sltn)$.

We define the {\it type-{\sf C} meander graph\/} $\Gamma^{\sf C}_n(\un{a}\!\mid\!\un{b})$ for 
$\q^{\sf C}_n(\un{a}\!\mid\!\un{b})$ to be the type-{\sf A} meander graph of the corresponding symmetric 
seaweed $\tilde\q=\tilde\p_1\cap\tilde\p_2^-\subset \gltn$. Formally,
\[
   \Gamma^{\sf C}_n(\un{a}\!\mid\!\un{b})=\Gamma^{\sf A}(\un{\tilde a}\!\mid\!\un{\tilde b}).
\]
We indicate below new features of these graphs.
\par\textbullet\quad
$\Gamma^{\sf C}_n(\un{a}\!\mid\!\un{b})$ has $2n$ consecutive vertices on a horizontal line  numbered from $1$ up to $2n$.
\par\textbullet\quad Part $a_1$ determines $[a_1/2]$  embedded arcs above the 
nodes $1,\dots,a_1$. By symmetry, the same set of arcs appears above the vertices 
$2n-a_1+1,\dots, 2n$.
Next,  part $a_2$ determines $[a_2/2]$ embedded
arcs above the nodes $a_1+1,\dots,a_1+a_2$ and also the symmetric set of arcs above the nodes 
$2n-a_1-a_2+1,\dots 2n-a_1$, etc.
\par\textbullet\quad If $d=n-\sum a_i >0$, then there are $2d$ unused vertices in the middle, and we draw $d$ embedded arcs above them. This corresponds to part $2d$ that occurs in the middle of
$\un{\tilde a}$. 
The arcs corresponding to $\un{b}$ are depicted by the same rules, but
 \un{\sl below\/} the horizontal line. 
\par\textbullet\quad
A type-{\sf C} meander graph is symmetric with respect to the vertical line between the $n$-th 
and $(n+1)$-th vertices, and the symmetry w.r.t this line is denoted by $\sigma$. We also say that this
line is the $\sigma$-{\it mirror}. The arcs crossing the $\sigma$-mirror are said to be {\it central}. These are exactly the arcs corresponding to $d=n-\sum a_i$ and $d'=n-\sum b_j$.

Our main result is the following formula for the index in terms of the connected components  of 
$\Gamma^{\sf C}_n(\un{a}{\mid}\un{b})$: 
\beq  \label{eq:index-C}
\ind\q^{\sf C}_n(\un{a}{\mid} \un{b})=\text{(number of cycles)} + \frac{1}{2}\text{(number of segments that are not $\sigma$-stable)} .
\eeq

\vskip.8ex\noindent
To illustrate this formula, we recall that, for the parabolic subalgebra $\p$ with Levi part
$\gt{gl}_{a_1}{\oplus}\ldots{\oplus}\gt{gl}_{a_s}{\oplus}\gt{sp}_{2d}$, we have 
$\ind\p=\left[ \frac{a_1}{2} \right ]+\ldots+\left[ \frac{a_s}{2} \right ]+d$, see \cite[Theorem\,5.5]{Dima01}.
Here  $\p_2^-=\spn$ and 
the  composition $\un{b}$ is empty.
On the other hand, the graph $\Gamma^{\sf C}_n(\un{a}\!\mid\!\varnothing)$ has $n$ central arcs 
below the horizontal line corresponding to $\un{b}=\varnothing$. Hence
each part $a_i$ gives 
rise to $\left[ \frac{a_i}{2} \right ]$ cycles and, if $a_i$ is odd, to one 
additional segment, which is $\sigma$-invariant. 
The middle part corresponding to $\gt{sp}_{2d}$ gives rise to $d$ cycles.  
This clearly yields the same answer, cf. Example~\ref{ex:parab-graf}. Hence we already know that 
Eq.~\eqref{eq:index-C} is correct, if
$\q$ is a parabolic subalgebra, i.e., if $\un{a}=\varnothing$ or $\un{b}=\varnothing$. Note also that $\ind\p=0$ if and only if $d=0$ and all $a_i=1$, i.e., if $\p=\be$.

\begin{ex}   \label{ex:parab-graf}
Here $\un{a}=(2,3)$ and $n=7$ (hence $d=2$), and the $\sigma$-mirror is represented by the vertical dotted line.
It is easily seen that the only segment here is $\sigma$-stable and the total number of circles is $4$. (The circles are depicted by {\color{blue}blue} arcs). Hence
$\ind\p=4$.

\begin{figure}[htb]
\setlength{\unitlength}{0.025in}
\begin{center}
\begin{picture}(140,40)(-5,-27)

\put(-50,-5){$\Gamma^{\sf C}_7(2,3\!\mid\!\varnothing)$\ :}
\multiput(0,3)(10,0){14}{\circle*{2}}

\put(65,1){\oval(70,35)[b]}

{\color{blue}
\put(5,5){\oval(10,5)[t]}
\put(30,5){\oval(20,10)[t]}
\put(100,5){\oval(20,10)[t]}
\put(125,5){\oval(10,5)[t]}
\put(65,5){\oval(30,15)[t]}
\put(65,5){\oval(10,5)[t]}
\put(65,1){\oval(10,5)[b]}
\put(65,1){\oval(30,15)[b]}
\put(65,1){\oval(50,25)[b]}
\put(65,1){\oval(90,45)[b]}
\put(65,1){\oval(110,55)[b]}
\put(65,1){\oval(130,65)[b]}

}
\qbezier[40](65,-35),(65,-10),(65,15)
\end{picture}
\end{center}
\caption{The meander graph for a parabolic subalgebra of  $\mathfrak{sp}_{14}$}   \label{pikcha_C7}
\end{figure}
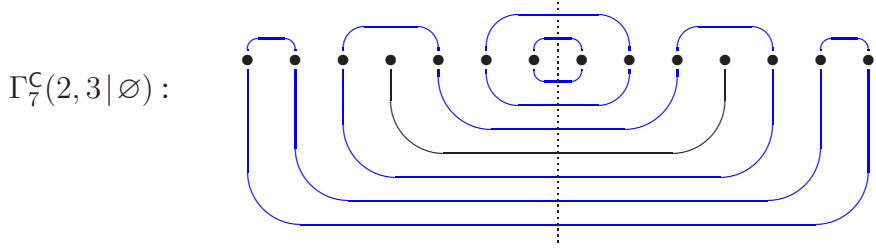
\end{ex}

\begin{rmk}
1) For both $\gln$ and $\spn$, one has $\q^\ast(\un{a}{\mid}\un{b})\simeq \q^\ast(\un{b}{\mid}\un{a})$. 
Hence one can freely choose what composition is going to appear first.

2) It is also true that $\q^\ast(\un{a}{\mid}\un{b})$ is reductive (i.e., a Levi subalgebra) if and only if
$\un{a}=\un{b}$.
\end{rmk}

{\bf Convention}. If $\q$ is a seaweed in either $\spn$ or $\gltn$, and the corresponding compositions are not specified, then the respective meander graph is denoted by $\Gamma^{\sf C}(\q)$ or $\Gamma^{\sf A}(\q)$.

\begin{rmk}                \label{mrs}
Let $\gt q$ be a seaweed in $\gt{sp}_{2n}$ or $\gt{gl}_n$. Then there is a point $\gamma\in\gt q^*$ such that 
the stabiliser $\gt q_\gamma\subset \gt q$ is a reductive subalgebra, see \cite{Dima03-b}. A Lie algebra 
possessing such a point in the dual space is said to be  {\it (strongly) quasi-reductive}~\cite{DKT}, see also 
\cite[Def.~2.1]{MY12}. One of the main results of 
\cite{DKT} states that if a Lie  algebra $\gt q=\Lie Q$ is strongly quasi-reductive, then there is a reductive stabiliser 
$Q_\gamma$ (with $\gamma\in \gt q^*$) such that any other reductive 
stabiliser $Q_\beta$  (with $\beta\in\gt q^*$) is contained in $Q_\gamma$ up to conjugation. In 
\cite{MY12} this subgroup $Q_\gamma$ is a called a {\it maximal reductive stabiliser}, MRS for short.
For a seaweed  $\gt q=\q^{\sf A}(\un{a}\!\mid\!\un{b})$, an MRS of $\gt q$ can be described in terms of 
$\Gamma^{\sf A}(\un{a}\!\mid\!\un{b})$ \cite[Theorem~5.3]{MY12}. A similar description is possible in type ${\sf C}$ if 
we use $\Gamma^{\sf C}_n(\un{a}\!\mid\!\un{b})$. It will appear elsewhere. 
\end{rmk}

\section{Symplectic meander graphs and the index of seaweed subalgebras}
\label{sect:main}

\noindent
In this section, we prove formula~\eqref{eq:index-C} on the index of the seaweed subalgebras of type {\sf C}.

Let us recall the inductive procedure for computing the index of seaweeds in a symplectic Lie 
algebra introduced by the first author~\cite{Dima01}. Suppose that $\un{a}=(a_1,\dots,a_s)$ and 
$\un{b}=(b_1,\dots,b_t)$ are two compositions with $\sum a_i \le n$ and $\sum b_j \le n$. Then we
consider the standard seaweed $\q^{\sf C}_n(\un{a}{\mid} \un{b})
\subset \spn$.

{\it\bfseries Inductive procedure}:

{\sl \bfseries 1.} If either $\un{a}$ or $\un{b}$ is empty, then $\q^{\sf C}_n(\un{a}{\mid} \un{b})$ is a parabolic subalgebra and the index is computed using \cite[Theorem\,5.5]{Dima01} (cf. also Introduction).

{\sl\bfseries 2.}  Suppose that both $\un{a}$ and $\un{b}$ are non-empty.
Without loss of generality, we can assume that $a_1\le b_1$. By \cite[Theorem\,5.2]{Dima01}, $\ind \q^{\sf C}_n(\un{a}{\mid} \un{b})$ can inductively be computed as follows:
\par
{\sf (i)}\quad If $a_1=b_1$, then $\q^{\sf C}_n(\un{a}{\mid} \un{b})\simeq \mathfrak{gl}_{a_1}\oplus \q^{\sf C}_{n-a_1}(a_2,\dots,a_s{\mid} b_2,\dots,b_t)$, hence
\par
\centerline{$
  \ind \q^{\sf C}_n(\un{a}{\mid} \un{b})=a_1+\ind \q^{\sf C}_{n-a_1}(a_2,\dots,a_s{\mid} b_2,\dots,b_t)$.}
\par
{\sf (ii)}\quad If $a_1<b_1$, then 
\[
\ind\q^{\sf C}_n(\un{a}{\mid}\un{b})=
\left\{\begin{array}{ll} 
\ind\q^{\sf C}_{n-a_1}(a_2,\dots,a_s{\mid} b_1-2a_1,a_1,b_2,\dots,b_t) 
& \ {\mathrm{if\ }} a_1\le b_1/2 ; \\
\ind\q^{\sf C}_{n-b_1+a_1}(2a_1-b_1,a_2,\dots,a_s{\mid} a_1,b_2,\dots,b_t) 
& \ {\mathrm{if\ }} a_1 > b_1/2  .
\end{array}\right.
\]
\par{\sf (iii)}\quad Step~{\sl\bfseries 2} terminates when one of the compositions becomes empty, i.e., one obtains a parabolic subalgebra in a smaller symplectic Lie algebra, where Step~{\sl\bfseries 1} applies.

\begin{rmk}   \label{rmk:another-ind}
Iterating transformations of the form {\sl\bfseries 2}{\sf (ii)} yields a formula that does not require considering cases, see~\cite[Theorem\,5.3]{Dima01}. Namely, if $a_1<b_1$, then
$\ind \q^{\sf C}_n(\un{a}{\mid} \un{b})=
\ind \q^{\sf C}_{n-a_1}(\un{a}'{\mid} \un{b}')$, where $\un{a}'=(a_2,\dots,a_s)$, 
$\un{b}'=(b_1',b_1'',b_2,\dots,b_t)$, and $b_1'$ and $b_1''$ are defined as follows.
Let $p$ be the unique integer such that
$\displaystyle \frac{p}{p+1} < \frac{a_1}{b_1}\le \frac{p+1}{p+2}$. Then 
$b_1'=(p+1)b_1-(p+2)a_1\ge 0$ and $b_2'=(p+1)a_1-pb_1>0$. 
(If $b_1'=0$, then it has to be omitted.)
\end{rmk}
\begin{thm}   
\label{thm:main}
Let\/ $\q=\q^{\sf C}_n(\un{a}{\mid} \un{b})$ be a seaweed in $\spn$ and 
$\Gamma^{\sf C}(\q)=\Gamma^{\sf C}_n(\un{a}{\mid}\un{b})$  the type-{\sf C} meander graph associated 
with $\q$. Then 
\[
\ind\q^{\sf C}_n(\un{a}{\mid} \un{b})=\#\text{\{cycles of\/ $\Gamma^{\sf C}_n(\un{a}{\mid}\un{b})$\}} + 
\frac{1}{2}\#\text{\{segments of\/ $\Gamma^{\sf C}_n(\un{a}{\mid}\un{b})$ that are not $\sigma$-stable\}} .
\]
\end{thm}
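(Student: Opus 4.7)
The plan is to prove the formula by induction, following the inductive procedure for computing $\ind\q^{\sf C}_n(\un{a}\!\mid\!\un{b})$ recalled at the beginning of this section. The base case is when one of the compositions is empty, so that $\q$ is a parabolic subalgebra; here the identity has already been verified in the discussion accompanying Example~\ref{ex:parab-graf}. For the inductive step, one shows that each of the two reduction moves of Step~\textbf{2} corresponds to a transformation of $\Gamma^{\sf C}_n(\un{a}\!\mid\!\un{b})$ whose effect on the right-hand side of the claimed formula matches the effect on the index.

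The simpler case is \textsf{(i)}, where $a_1=b_1$. Here the first $a_1$ vertices together with, by $\sigma$-symmetry, the last $a_1$ vertices form a subgraph disconnected from the rest of $\Gamma^{\sf C}_n(\un{a}\!\mid\!\un{b})$: on each end the upper and lower arcs are both embedded within that same block. A direct count shows that this subgraph contributes $2\lfloor a_1/2\rfloor$ cycles, plus (when $a_1$ is odd) two isolated vertices that are interchanged by $\sigma$ and hence form two non-$\sigma$-stable segments. Its contribution to the RHS is therefore $2\lfloor a_1/2\rfloor+\tfrac{1}{2}\cdot 2\cdot(a_1\bmod 2)=a_1$, matching the $+a_1$ appearing in the index due to the $\mathfrak{gl}_{a_1}$ direct summand. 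After removing these $2a_1$ vertices, one is left with exactly $\Gamma^{\sf C}_{n-a_1}(a_2,\dots,a_s\!\mid\! b_2,\dots,b_t)$, so the induction closes.

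For case \textsf{(ii)}, when $a_1<b_1$, the essential task is to describe a graph surgery that converts $\Gamma^{\sf C}_n(\un{a}\!\mid\!\un{b})$ into the meander graph of the reduced seaweed. Intuitively, one deletes the first $a_1$ vertices on the left (and symmetrically the last $a_1$ on the right) together with all the upper arcs above them; the lower arcs that previously had an endpoint among these deleted vertices are rerouted through the deleted upper arcs, producing new arcs whose positions match the rearranged lower composition $(b_1-2a_1,a_1,b_2,\dots,b_t)$ when $a_1\le b_1/2$, or symmetrically the reconfigured $(2a_1-b_1,a_2,\dots,a_s\!\mid\! a_1,b_2,\dots,b_t)$ when $a_1>b_1/2$. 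I would verify that this surgery is $\sigma$-equivariant and consists locally of a sequence of ``contraction'' moves, each of which removes two vertices without splitting or merging any connected component, and hence preserves both the number of cycles and the number of $\sigma$-orbits of non-$\sigma$-stable segments.

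The main obstacle is the bookkeeping near the $\sigma$-mirror, where central arcs interact nontrivially with the symmetry: one has to ensure that no $\sigma$-stable segment is accidentally created or destroyed by the surgery, since only non-$\sigma$-stable segments enter the formula. I expect this to be handled by a careful local case analysis according to the parities of $a_1,b_1$ and the sizes $d=n-\sum a_i$, $d'=n-\sum b_j$ of the central blocks, distinguishing whether the rerouted arcs stay on one side of the $\sigma$-mirror or cross it. Once this delicate local check is settled, the rest of the argument is straightforward bookkeeping; as a sanity check one can cross-verify the surgery by applying the already known type-{\sf A} formula of Dergachev--Kirillov to the symmetric seaweed $\tilde\q\subset\gltn$ corresponding to $\q$.
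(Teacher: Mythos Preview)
Your overall strategy---induction along the recursive procedure, with the parabolic case as the base and the two reduction moves as the inductive step---is exactly the paper's approach, and your treatment of the base case and of case~\textsf{(i)} matches the paper's.

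The difference is in case~\textsf{(ii)}. You anticipate a ``delicate local case analysis'' near the $\sigma$-mirror, distinguishing parities and central-block sizes, to ensure that no $\sigma$-stable segment is created or destroyed. The paper sidesteps this entirely. Its observation is that one step of the $\mathfrak{sp}$-reduction on $\q$ is \emph{literally} two independent steps of the $\mathfrak{gl}$-reduction applied to the two ends of $\Gamma^{\sf A}(\tilde\q)=\Gamma^{\sf C}(\q)$, where $\tilde\q\subset\gltn$ is the associated symmetric seaweed. Since the $\mathfrak{gl}$-reduction is already known to preserve the full topological structure of a type-{\sf A} meander graph (this is \cite[Lemma~5.4(i)]{MY12}), and since the two reductions are interchanged by $\sigma$, the surgery is automatically $\sigma$-equivariant and the $\sigma$-stability of every connected component is preserved. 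No case analysis is needed: the reduction takes place at the far ends of the graph (within the first $b_1\le n$ and last $b_1$ vertices), never touching the mirror, and $\sigma$-equivariance of the move guarantees that $\sigma$-orbits of components are respected.

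In short, what you list as a ``sanity check''---passing to the symmetric seaweed in $\gltn$ and invoking the type-{\sf A} result---is in the paper the entire argument for case~\textsf{(ii)}, and it makes the anticipated obstacle disappear.
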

\begin{proof}
Our argument exploits the above {\it inductive procedure}. 
Let us temporarily write $\mathcal{T}_n(\un{a}{\mid}\un{b})$ for the topological quantity in the right hand side 
of the formula.
Let us prove that for the pairs of seaweeds occurring in either {\sl\bfseries 2}{\sf (i)} or 
{\sl\bfseries 2}{\sf (ii)} of the inductive procedure, the required topological quantity behaves accordingly.    

If $a_1=b_1$ and $\gt{gl}_{a_1}$ is a direct summand of $\gt q$, then the index of
$\q^{\sf C}_{n-a_1}(a_2,\dots,a_s{\mid} b_2,\dots,b_t)$ decreases by $a_1$; on the other hand, 
$\Gamma^{\sf C}_{n-a_1}(a_2,\dots,a_s{\mid} b_2,\dots,b_t)$  is obtained from $\Gamma^{\sf C}(\q)$ by
deleting $2\left[ \frac{a_1}{2} \right ]$ cycles (and two segments, which are not 
$\sigma$-invariant in case $a_1$ is odd). This is in perfect agreement with the formula.

If $a_1<b_1$, then one step of $\mathfrak{sp}$-reduction for $\q$ is equivalent to two steps of 
$\mathfrak{gl}$-reduction for the meander graph of $\Gamma^{\sf A}(\tilde{\q})$, where 
$\tilde{\gt q}$ is the corresponding symmetric seaweed in $\gt{gl}_{2n}$. These two ``symmetric'' steps are 
applied one after another 
to the left and right hand sides of $\Gamma^{\sf A}(\tilde{\q})=\Gamma^{\sf C}(\gt q)$. According 
to~\cite[Lemma\,5.4(i)]{MY12}, the $\mathfrak{gl}$-reduction does not change the topological structure of 
the graph. Hence 
$\mathcal{T}_n(\un{a}{\mid}\un{b})=\mathcal{T}_{n-a_1}(\un{a}'{\mid}\un{b}')$.

Since we have already observed (in Section~\ref{sect:generalities}) that our formula holds for the parabolic subalgebras, the result follows.
\end{proof}
\begin{ex}   \label{ex:reductio}
For the seaweed $\q_{10}(3,3{\mid} 4,5)$ in $\mathfrak{sp}_{20}$, the recursive formula of Remark~\ref{rmk:another-ind} 
yields the following chain  of reductions:
\[
  \q=\q^{\sf C}_{10}(3,3{\mid} 4,5)\leadsto \q^{\sf C}_{7}(3{\mid} 1,5)\leadsto \q^{\sf C}_{6}(1,1{\mid} 5)\leadsto 
  \q^{\sf C}_{5}(1{\mid} 3,1)\leadsto \q^{\sf C}_{4}(\varnothing{\mid} 1,1,1) .
\]
The last term represents the minimal parabolic subalgebra of $\mathfrak{sp}_8$ corresponding to the unique long simple root. The respective graphs are gathered in Figure~\ref{pikcha_C10}. It is readily
seen that both ends of the graphs undergo the symmetric transformations on each step; also all the segments are $\sigma$-stable and the total number of cycles equals 1. Thus, $\ind\q=1$.
 
\begin{figure}[htb]
\setlength{\unitlength}{0.025in}
\begin{center}

\begin{picture}(200,35)(-20,-10)
\put(-45,0){$\Gamma^{\sf C}_{10}(3,3\!\mid\! 4,5)$\ :}
\multiput(0,3)(10,0){20}{\circle*{2}}

\put(10,5){\oval(20,10)[t]}
\put(40,5){\oval(20,10)[t]}
{\color{blue}\put(95,5){\oval(10,5)[t]}}
\put(95,5){\oval(30,15)[t]}
\put(95,5){\oval(50,25)[t]}
\put(95,5){\oval(70,35)[t]}
\put(150,5){\oval(20,10)[t]}
\put(180,5){\oval(20,10)[t]}

\put(15,1){\oval(30,15)[b]}
\put(15,1){\oval(10,5)[b]}
\put(60,1){\oval(20,10)[b]}
\put(60,1){\oval(40,20)[b]}
{\color{blue}\put(95,1){\oval(10,5)[b]}}
\put(130,1){\oval(20,10)[b]}
\put(130,1){\oval(40,20)[b]}
\put(175,1){\oval(10,5)[b]}
\put(175,1){\oval(30,15)[b]}

\qbezier[45](95,-15),(95,5),(95,25)
\end{picture}

\begin{picture}(200,45)(-20,-10)
\put(-40,0){$\Gamma^{\sf C}_{7}(3\!\mid\! 1,5)$\ :}
\multiput(30,3)(10,0){14}{\circle*{2}}

\put(40,5){\oval(20,10)[t]}
{\color{blue}\put(95,5){\oval(10,5)[t]}}
\put(95,5){\oval(30,15)[t]}
\put(95,5){\oval(50,25)[t]}
\put(95,5){\oval(70,35)[t]}
\put(150,5){\oval(20,10)[t]}

\put(60,1){\oval(20,10)[b]}
\put(60,1){\oval(40,20)[b]}
{\color{blue}\put(95,1){\oval(10,5)[b]}}
\put(130,1){\oval(20,10)[b]}
\put(130,1){\oval(40,20)[b]}

\qbezier[45](95,-15),(95,5),(95,25)
\end{picture}

\begin{picture}(200,45)(-20,-10)
\put(-40,0){$\Gamma^{\sf C}_{6}(1,1\!\mid\! 5)$\ :}
\multiput(40,3)(10,0){12}{\circle*{2}}

{\color{blue}\put(95,5){\oval(10,5)[t]}}
\put(95,5){\oval(30,15)[t]}
\put(95,5){\oval(50,25)[t]}
\put(95,5){\oval(70,35)[t]}

\put(60,1){\oval(20,10)[b]}
\put(60,1){\oval(40,20)[b]}
{\color{blue}\put(95,1){\oval(10,5)[b]}}
\put(130,1){\oval(20,10)[b]}
\put(130,1){\oval(40,20)[b]}

\qbezier[45](95,-15),(95,5),(95,25)
\end{picture}

\begin{picture}(200,45)(-20,-10)
\put(-40,0){$\Gamma^{\sf C}_{5}(1\!\mid\! 3,1)$\ :}
\multiput(50,3)(10,0){10}{\circle*{2}}

{\color{blue}\put(95,5){\oval(10,5)[t]}}
\put(95,5){\oval(30,15)[t]}
\put(95,5){\oval(50,25)[t]}
\put(95,5){\oval(70,35)[t]}

\put(60,1){\oval(20,10)[b]}
{\color{blue}\put(95,1){\oval(10,5)[b]}}
\put(130,1){\oval(20,10)[b]}

\qbezier[45](95,-15),(95,5),(95,25)
\end{picture}

\begin{picture}(200,45)(-20,-10)
\put(-40,0){$\Gamma^{\sf C}_{4}(\varnothing\!\mid\! 1,1,1)$\ :}
\multiput(60,3)(10,0){8}{\circle*{2}}

{\color{blue}\put(95,5){\oval(10,5)[t]}}
\put(95,5){\oval(30,15)[t]}
\put(95,5){\oval(50,25)[t]}
\put(95,5){\oval(70,35)[t]}

{\color{blue}\put(95,1){\oval(10,5)[b]}}
 
 \qbezier[45](95,-15),(95,5),(95,25)
\end{picture}
\end{center}
\caption{The reduction steps for a seaweed subalgebra of  $\mathfrak{sp}_{20}$}   \label{pikcha_C10}
\end{figure}
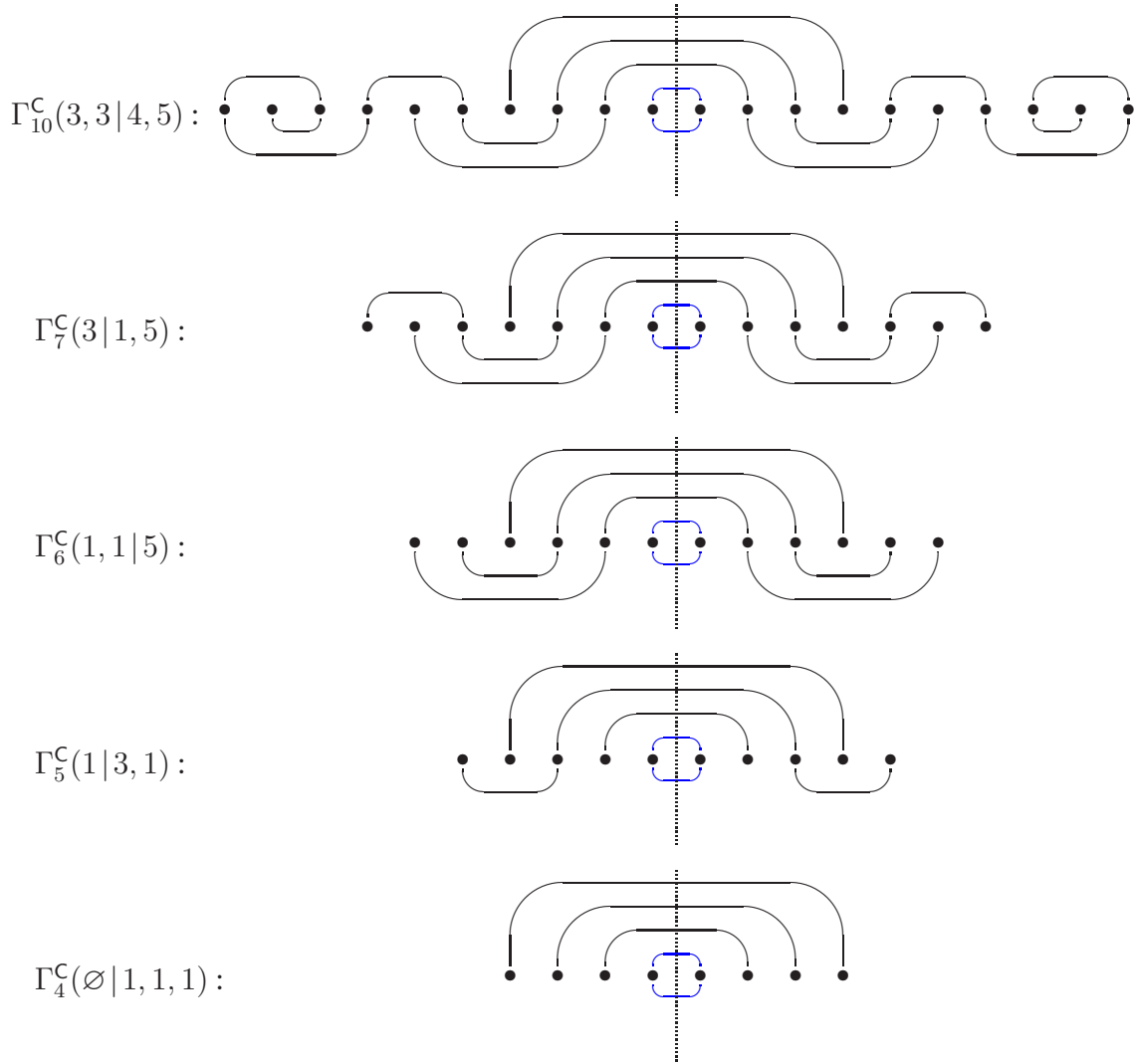

One can notice that each reduction step consists of contracting certain arcs starting from some end vertices of
a meander graph. Clearly, such a procedure does not change the topological structure of the graph, and this is 
exactly how Lemma~5.4(i) in \cite{MY12} has been proved.
\end{ex}

\begin{ex}   \label{ex:frob-16}
In Figure~\ref{pikcha_C8}, one finds the graph of a  seaweed in $\mathfrak{sp}_{16}$ of index $1$. The segments that are not 
$\sigma$-stable are depicted by {\color{red}red} arcs.
\begin{figure}[htb]
\setlength{\unitlength}{0.025in}
\begin{center}
\begin{picture}(160,25)(-5,-10)
\multiput(0,3)(10,0){16}{\circle*{2}}
\put(-45,0){$\Gamma^{\sf C}_{8}(3,4\!\mid\! 5,3)$\ :}

\put(10,5){\oval(20,10)[t]}
\put(45,5){\oval(10,5)[t]}
{\color{red}\put(45,5){\oval(30,15)[t]}}
\put(75,5){\oval(10,5)[t]}
\put(105,5){\oval(10,5)[t]}
{\color{red}\put(105,5){\oval(30,15)[t]}}
\put(140,5){\oval(20,10)[t]}

{\color{red}\put(20,1){\oval(20,10)[b]}}
\put(20,1){\oval(40,20)[b]}
\put(60,1){\oval(20,10)[b]}
\put(90,1){\oval(20,10)[b]}
{\color{red}\put(130,1){\oval(20,10)[b]}}
\put(130,1){\oval(40,20)[b]}

\qbezier[25](75,-11),(75,2),(75,15)
\end{picture}
\end{center}
\caption{A  seaweed subalgebra of  $\mathfrak{sp}_{16}$ with index $1$}   \label{pikcha_C8}
\end{figure}
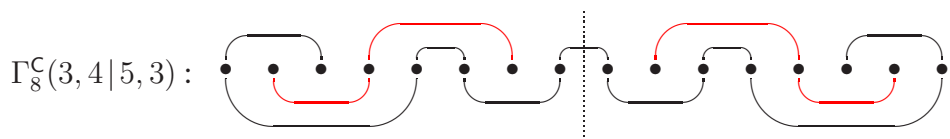
\end{ex}

\section{Applications of symplectic meander graphs}
\label{sect:applic}

\noindent
In this section, we present some applications of Theorem~\ref{thm:main}. We begin with a simple
property of the index.

\begin{lm}  \label{lm:simple-prop}
If \ $\sum a_i<n$ and \ $\sum b_j< n$, then 
$
\ind\q^{\sf C}_n(\un{a}{\mid}\un{b})=(n-n')+ \ind\q^{\sf C}_{n'}(\un{a}{\mid}\un{b}) ,
$
where $n'=\max\{\sum a_i, \sum b_j\}$.
\end{lm}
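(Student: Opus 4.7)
The plan is to read off the identity directly from the meander graph description together with Theorem~\ref{thm:main}. Without loss of generality assume $\sum b_j\ge \sum a_i$, so $n'=\sum b_j$. Write $d=n-\sum a_i$ and $d'=n-n'=n-\sum b_j$, so that $d\ge d'\ge 1$. Recall from Section~\ref{sect:generalities} that the $d$ central upper arcs of $\Gamma^{\sf C}_n(\un{a}\mid\un{b})$ join vertex $\sum a_i+k$ to vertex $2n+1-\sum a_i-k$ for $k=1,\dots,d$, while the $d'$ central lower arcs join vertex $\sum b_j+k$ to vertex $2n+1-\sum b_j-k$ for $k=1,\dots,d'$.

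First I would isolate the subgraph induced on the $2d'$ innermost vertices, i.e.\ those in positions $n-d'+1,\dots,n+d'$. Since each such position lies strictly above $\sum a_i$ and strictly above $\sum b_j$, no non-central arc is incident to it, and the only central arcs touching it are the $d'$ innermost upper arcs and all $d'$ central lower arcs. For each $k=1,\dots,d'$ the upper arc and the lower arc joining $n-k+1$ to $n+k$ form a two-vertex cycle (a bigon). Thus the induced subgraph on these $2d'$ vertices consists of exactly $d'$ disjoint $\sigma$-stable cycles, and it is disconnected from the rest of the graph.

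Next I would show that deleting these $2d'$ vertices, and their incident bigons, leaves precisely $\Gamma^{\sf C}_{n'}(\un{a}\mid\un{b})$ after the obvious shift of labels (subtract $2d'$ from every label $>n+d'$). Indeed, the remaining $2n'$ vertices carry the non-central upper arcs of the $\un{a}$-blocks, the non-central lower arcs of the $\un{b}$-blocks, and the $d-d'$ wider central upper arcs (which, after relabelling, are precisely the $n'-\sum a_i$ central upper arcs of the smaller graph); the relabelled graph has no central lower arcs, in agreement with $\sum b_j=n'$.

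Finally I would invoke Theorem~\ref{thm:main} for both graphs. The removed $d'$ bigons are cycles, and they are $\sigma$-stable, so their removal drops the cycle count by exactly $d'$ while the number of segments that are not $\sigma$-stable is unchanged. Hence
\[
\ind\q^{\sf C}_n(\un{a}\mid\un{b}) \;=\; d' + \ind\q^{\sf C}_{n'}(\un{a}\mid\un{b}) \;=\; (n-n') + \ind\q^{\sf C}_{n'}(\un{a}\mid\un{b}),
\]
as required. The only delicate point is the verification that the innermost $2d'$ vertices form an isolated subgraph of $d'$ bigons; once this is in hand, the argument is pure bookkeeping.
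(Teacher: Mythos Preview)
Your proposal is correct and follows essentially the same approach as the paper: you identify the $n-n'$ ``central circles'' (bigons formed by matching innermost upper and lower central arcs), observe that removing them yields $\Gamma^{\sf C}_{n'}(\un{a}\mid\un{b})$, and apply Theorem~\ref{thm:main}. The paper compresses this into two sentences, whereas you spell out the vertex labels and the relabelling explicitly; the underlying argument is identical.
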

\begin{proof}
Here $\Gamma^{\sf C}_n(\un{a}{\mid}\un{b})$ contains  $n-n'$ arcs crossing the 
$\sigma$-mirror on the {\bf both} sides of 
the horizontal line. They form $n-n'$ central circles, and removing these circles reduces the index by 
$n-n'$ and yields the graph $\Gamma^{\sf C}_{n'}(\un{a}{\mid}\un{b})$.
\end{proof}

\noindent
Recall that a Lie algebra $\q$ is {\it Frobenius}, if $\ind \q=0$. In the rest of the section, we apply
Theorem~\ref{thm:main} to studying Frobenius seaweeds. 
Clearly, if $\q^{\sf C}_n(\un{a}{\mid}\un{b})$ is Frobenius, then 
$\Gamma^{\sf C}_n(\un{a}{\mid}\un{b})$ has only  $\sigma$-stable  segments and no cycles. 
Another consequence of Theorem~\ref{thm:main} is the following necessary condition.
\begin{lm}   \label{lm:simple-cons}
If\/  $\q^{\sf C}_n(\un{a}{\mid}\un{b})$ is Frobenius, then either $\sum a_i<n$ and $\sum b_j=n$ or 
vice versa.
\end{lm}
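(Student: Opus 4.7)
The plan is to rule out the two complementary cases: both sums strictly less than $n$, and both sums equal to $n$. Since $\sum a_i\le n$ and $\sum b_j\le n$ by construction, showing that these two possibilities are incompatible with being Frobenius forces exactly one of the sums to equal $n$.

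First, suppose $\sum a_i<n$ and $\sum b_j<n$. Then Lemma~\ref{lm:simple-prop} applies directly: setting $n'=\max\{\sum a_i,\sum b_j\}<n$, one obtains
\[
\ind\q^{\sf C}_n(\un{a}{\mid}\un{b})=(n-n')+\ind\q^{\sf C}_{n'}(\un{a}{\mid}\un{b})\ge n-n'\ge 1,
\]
so $\q$ cannot be Frobenius.

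Next, suppose $\sum a_i=\sum b_j=n$, so that $d=d'=0$ and $\Gamma^{\sf C}_n(\un{a}{\mid}\un{b})$ has no central arcs. Every arc (above or below the horizontal line) is then contained in one of the two halves $\{1,\dots,n\}$ or $\{n+1,\dots,2n\}$, and therefore every connected component of $\Gamma^{\sf C}_n(\un{a}{\mid}\un{b})$ lies entirely on one side of the $\sigma$-mirror. The involution $\sigma$ is a graph automorphism swapping the two halves, so it pairs components on the left with components on the right; in particular \emph{no} component is $\sigma$-stable. Since $n\ge 1$, the $2n$ vertices produce at least one component, and by Theorem~\ref{thm:main} this contributes either a cycle or a (non-$\sigma$-stable) segment to the index. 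The latter occurs in $\sigma$-pairs, so the resulting contribution to $\ind\q$ is already at least $1$; in either case $\ind\q\ge 1$, contradicting Frobenius.

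Combining the two cases, exactly one of $\sum a_i$ and $\sum b_j$ equals $n$, which is the claim. The only subtlety in the argument is keeping track of the parity/symmetry count in the second case: one must observe that non-$\sigma$-stable segments always come in pairs under $\sigma$, so that $\tfrac12\#\{\text{non-}\sigma\text{-stable segments}\}$ is still a positive integer whenever such a segment exists. This is immediate from the fact that $\sigma$ is a fixed-point-free involution on the vertex set when $d=d'=0$.
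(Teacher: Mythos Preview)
Your proof is correct and follows essentially the same route as the paper: the first case is dispatched via Lemma~\ref{lm:simple-prop}, and the second case is handled by observing that with no central arcs the graph splits into two $\sigma$-symmetric halves, so the topological quantity in Theorem~\ref{thm:main} is positive. The paper additionally remarks that in the second case $\q^{\sf C}_n(\un{a}{\mid}\un{b})\simeq\q^{\sf A}(\un{a}{\mid}\un{b})\subset\gln$, giving $\ind\q\ge 1$ by Remark~\ref{rem:ind-gl-sl}, but this is an aside rather than a different argument. One minor quibble: your final sentence ties the pairing of non-$\sigma$-stable segments to the condition $d=d'=0$, but that pairing holds in general since $\sigma$ is always a graph automorphism; what $d=d'=0$ actually buys you is that \emph{every} component lies in a single half and hence none is $\sigma$-stable.
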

\begin{proof}
If $\sum a_i<n$ and $\sum b_j<n$, then the index is positive in view of Lemma~\ref{lm:simple-prop}.
If $\sum a_i=\sum b_j=n$, then
there are no arcs crossing the $\sigma$-mirror. Therefore $\Gamma^{\sf C}_n(\un{a}{\mid}\un{b})$ 
consists of two disjoint $\sigma$-symmetric parts, and the topological quantity of 
Theorem~\ref{thm:main} cannot be equal to $0$.
(More precisely, in the second case $\q^{\sf C}_n(\un{a}{\mid}\un{b})$ is isomorphic to the seaweed 
$\q^{\sf A}(\un{a}{\mid}\un{b})$ in $\gln$, and $\ind\q\ge 1$ for all seaweeds $\q\subset\gln$, see 
Remark~\ref{rem:ind-gl-sl}.)
\end{proof}
Graphically, Lemma~\ref{lm:simple-cons} means that, for a  Frobenius seaweed, 
one must have some central arcs (=\,arcs crossing the $\sigma$-mirror) on one side of 
the horizontal line in the meander graph, and then there has to be no central arcs on the other side.
The number of central arcs can vary from $1$ to $n$ (the last possibility represents the case in which 
one of the parabolics is the Borel subalgebra). 
Let $\boldsymbol{\eus {F}}_{n,k}$ denote the set of standard Frobenius seaweeds whose meander 
graph contains $k$ central arcs.  
Then $\boldsymbol{\eus {F}}_{n}=\bigsqcup_{k=1}^n \boldsymbol{\eus {F}}_{n,k}$ is the set of all standard 
Frobenius seaweeds in $\spn$.  If $\q^{\sf C}_n(\un{a}{\mid}\un{b})$ lies in 
$\boldsymbol{\eus {F}}_{n,k}$, then
so is $\q^{\sf C}_n(\un{b}{\mid}\un{a})$. As we are interested in essentially different meander graphs, 
we will not distinguish graphs and algebras corresponding to $(\un{a}{\mid}\un{b})$ and 
$(\un{b}{\mid}\un{a})$.
Set $\boldsymbol{F}_{n,k}=\#(\boldsymbol{\eus {F}}_{n,k}/{\sim})$ and 
$\boldsymbol{F}_{n}=\#(\boldsymbol{\eus {F}}_{n}/\sim)$, where $\sim$ is the corresponding 
equivalence relation.  Then \\
$\boldsymbol{F}_{n,n}=1$, \ 
$\boldsymbol{F}_{n,n{-}1}=\begin{cases}  1, &  n=2; \\   2, & n\ge 3. \end{cases}$,  and  \ 
$\boldsymbol{F}_{n,n{-}2}=\begin{cases}  2, &  n=3; \\   4, & n=4; \\
5, & n\ge 5. \end{cases}$. 
\\ \indent
It  follows from Lemma~\ref{lm:simple-cons} that if 
$\q^{\sf C}_n(\un{a}{\mid}\un{b})\in \boldsymbol{\eus {F}}_{n}$ and $\sum b_j=n$, then the integer
$k$ such that $\q^{\sf C}_n(\un{a}{\mid}\un{b})\in \boldsymbol{\eus {F}}_{n,k}$ is determined as 
$k=n-\sum a_i$.
In Figure~\ref{pikcha_C7-2}, one finds the meander graphs of Frobenius seaweeds in $\mathfrak{sp}_{14}$ with $k=1$ and $2$.

\begin{figure}[htb]
\setlength{\unitlength}{0.025in}
\begin{center}
\begin{picture}(140,25)(-10,-10)
\multiput(0,3)(10,0){14}{\circle*{2}}
\put(-45,0){$\Gamma^{\sf C}_{7}(2,4\!\mid\! 4,3)$\ :}

\put(5,5){\oval(10,5)[t]}
\put(35,5){\oval(30,15)[t]}
\put(35,5){\oval(10,5)[t]}
\put(65,5){\oval(10,5)[t]}
\put(95,5){\oval(30,15)[t]}
\put(95,5){\oval(10,5)[t]}
\put(125,5){\oval(10,5)[t]}

\put(15,1){\oval(30,15)[b]}
\put(15,1){\oval(10,5)[b]}
\put(50,1){\oval(20,10)[b]}
\put(80,1){\oval(20,10)[b]}
\put(115,1){\oval(10,5)[b]}
\put(115,1){\oval(30,15)[b]}

\qbezier[25](65,-11),(65,2),(65,15)
\end{picture}

\begin{picture}(140,28)(-10,-5)
\multiput(0,3)(10,0){14}{\circle*{2}}
\put(-45,0){$\Gamma^{\sf C}_{7}(3,2\!\mid\! 2,5)$\ :}

\put(10,5){\oval(20,10)[t]}
\put(35,5){\oval(10,5)[t]}
\put(65,5){\oval(10,5)[t]}
\put(65,5){\oval(30,15)[t]}
\put(95,5){\oval(10,5)[t]}
\put(120,5){\oval(20,10)[t]}

\put(5,1){\oval(10,5)[b]}
\put(40,1){\oval(20,10)[b]}
\put(40,1){\oval(40,20)[b]}
\put(90,1){\oval(40,20)[b]}
\put(90,1){\oval(20,10)[b]}
\put(125,1){\oval(10,5)[b]}

\qbezier[25](65,-11),(65,2),(65,15)
\end{picture}

\end{center}
\caption{Frobenius seaweed subalgebras of  $\mathfrak{sp}_{14}$}   
\label{pikcha_C7-2}
\end{figure}
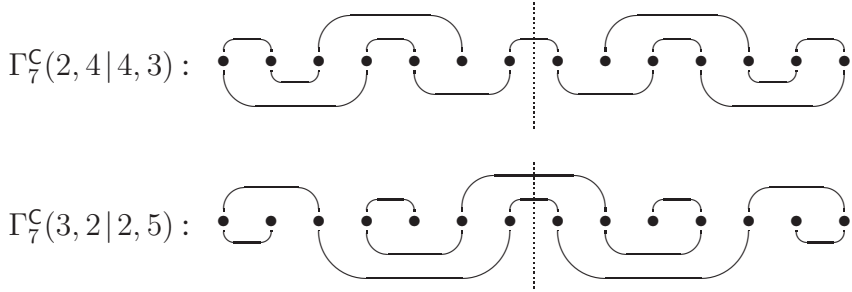

\begin{lm}   \label{lm:k-connected}
If $\q \in \boldsymbol{\eus {F}}_{n,k}$, then $\Gamma^{\sf C}(\q)$ has exactly $k$ connected 
components ($\sigma$-stable segments) corresponding to the central arcs. Furthermore, the total number of arcs in $\Gamma^{\sf C}(\q)$ equals $2n-k$.
\end{lm}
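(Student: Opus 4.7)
The plan is to combine Theorem~\ref{thm:main} with a short structural analysis of $\sigma$-stable segments. Since $\q$ is Frobenius, formula~\eqref{eq:index-C} forces $\Gamma^{\sf C}(\q)$ to contain no cycles and to have all its segments $\sigma$-stable. So every connected component of $\Gamma^{\sf C}(\q)$ is a $\sigma$-stable segment, and it remains to count them.

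The key step is to show that each $\sigma$-stable segment contains exactly one central arc, namely its middle edge. Let $S$ be such a segment with consecutive vertices $v_0,v_1,\dots,v_m$. Because the $\sigma$-mirror lies strictly between vertex $n$ and vertex $n{+}1$, no vertex is fixed by $\sigma$; in particular the two endpoints cannot coincide with their own $\sigma$-images, so the only way for $\sigma$ to preserve the path $S$ is to reverse it: $\sigma(v_i)=v_{m-i}$. This forces $m$ to be odd, say $m=2\ell+1$, and the unique $\sigma$-fixed edge is the middle one $e_\ell=(v_\ell,v_{\ell+1})$. Since $v_{\ell+1}=\sigma(v_\ell)$, the endpoints of $e_\ell$ lie on opposite sides of the mirror, hence $e_\ell$ is central. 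For $i\neq\ell$ the edge $e_i$ is paired with the distinct edge $e_{m-1-i}$ under $\sigma$, so $e_i$ is not $\sigma$-invariant; but by construction every central arc in $\Gamma^{\sf C}_n(\un a\!\mid\!\un b)$ is drawn symmetrically across the mirror and is therefore $\sigma$-invariant, so $e_i$ cannot be central.

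Conversely, the component containing any central arc $\alpha$ is automatically $\sigma$-stable, because it contains $\alpha=\sigma(\alpha)$ and so meets its own $\sigma$-image. Hence the map sending a $\sigma$-stable segment to its unique central arc is a bijection between the components of $\Gamma^{\sf C}(\q)$ and the set of central arcs. By assumption there are $k$ central arcs, so $\Gamma^{\sf C}(\q)$ has exactly $k$ connected components, each a $\sigma$-stable segment. For the arc count, the $2n$ vertices are partitioned among these $k$ segments; a segment on $v_i$ vertices carries $v_i-1$ arcs, and summing gives $\sum_i (v_i-1)=2n-k$. I do not expect a real obstacle here: once the $\sigma$-action on a single path is pinned down, everything else is bookkeeping.
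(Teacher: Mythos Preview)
Your proof is correct and establishes the same bijection between connected components and central arcs as the paper, but the mechanism differs. The paper argues geometrically: it first shows that the components $\Gamma_1,\dots,\Gamma_k$ containing the $k$ central arcs are pairwise distinct, because if two central arcs lay in the same segment their continuations would meet in the left half of the graph and, by $\sigma$-symmetry, also in the right half, closing up into a forbidden cycle; it then rules out any further component, since one avoiding all central arcs would lie entirely in one half and hence have a $\sigma$-twin, yielding a pair of non-$\sigma$-stable segments and positive index. You instead analyse the $\sigma$-action on an individual $\sigma$-stable path: the absence of $\sigma$-fixed vertices forces $\sigma$ to reverse the path, so the path has an odd number of edges with a unique $\sigma$-fixed middle edge, which you then identify as the unique central arc of that component. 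Your route is a little more systematic---both ``at least one'' and ``at most one'' central arc per component drop out of the same reversal observation---while the paper's argument is shorter and more visual. Both finish the arc count identically, as the standard forest edge count on $2n$ vertices with $k$ components.
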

\begin{proof} 
1) \ Let $\ca_i$ be the $i$-th central arc and $\Gamma_i$ the connected component of 
$\Gamma^{\sf C}(\q)$ that contains $\ca_i$. Each $\Gamma_i$ is a $\sigma$-stable segment.
\par\textbullet\quad If $\Gamma_i=\Gamma_j$ for $i\ne j$, then continuations of $\ca_i$ and $\ca_j$ meet 
somewhere in the left hand half of $\Gamma^{\sf C}(\q)$. By symmetry, the same happens in the right hand 
half, which produces a cycle. Hence the connected components $\Gamma_1,\dots,\Gamma_k$ must be 
different. 
\par\textbullet\quad Assume that there exists yet another connected component $\Gamma_{k+1}$. Then
it belongs to only one half of $\Gamma^{\sf C}(\q)$. By symmetry, there is also the ``same'' component
$\Gamma_{k+2}$ in the other half of $\Gamma^{\sf C}(\q)$. This would imply that $\ind\q>0$.

2) \ Since the graph $\Gamma^{\sf C}(\q)$ has $2n$ vertices and is a disjoint union of $k$ trees, the number of edges (arcs) must be $2n-k$.
\end{proof}

\begin{lm}   \label{lm:monotonicity}
For any $k\ge1$, there is an injective map
$\boldsymbol{\eus F}_{n,k}\to  \boldsymbol{\eus F}_{n+1,k+1}$. Moreover, 
$\boldsymbol{F}_{n+1}> \boldsymbol{F}_{n}$, that is,
the total number of Frobenius seaweeds strictly increases under the passage from $n$ to $n+1$.
\end{lm}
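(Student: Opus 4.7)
The plan is to construct the injection $\Phi_k\colon \boldsymbol{\eus F}_{n,k}\to \boldsymbol{\eus F}_{n+1,k+1}$ by appending a trivial block to the composition of larger total, and then to deduce $\boldsymbol{F}_{n+1}>\boldsymbol{F}_n$ from the fact that $\boldsymbol{\eus F}_{n+1,1}$ lies outside the image of the assembled injection. Given $\q=\q^{\sf C}_n(\un{a}\mid\un{b})\in \boldsymbol{\eus F}_{n,k}$, Lemma~\ref{lm:simple-cons} lets me normalise (by swapping $\un{a},\un{b}$ if needed) so that $\sum b_j=n$ and $\sum a_i=n-k$; since $k\ge 1$, this representative is canonical. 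I set
\[
\Phi_k(\q):=\q^{\sf C}_{n+1}(\un{a}\mid b_1,\dots,b_t,1).
\]
The central-arc counts $(n{+}1)-(n{-}k)=k{+}1$ above and $(n{+}1)-(n{+}1)=0$ below are correct for $\boldsymbol{\eus F}_{n+1,k+1}$, and injectivity is immediate: in the image the composition summing to $n{+}1$ is unambiguously the bottom one, and removing its trailing $1$ recovers $\un{b}$.

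The substantive step is verifying that $\Phi_k(\q)$ is Frobenius. I would do this by directly comparing $\Gamma=\Gamma^{\sf C}_n(\un{a}\mid\un{b})$ and $\Gamma'=\Gamma^{\sf C}_{n+1}(\un{a}\mid\un{b},1)$ through the $\sigma$-equivariant vertex embedding $\iota$ that sends $i\mapsto i$ for $i\le n$ and $i\mapsto i{+}2$ for $i\ge n{+}1$. Since $\un{a}$ and the $\un{b}$-blocks $b_1,\dots,b_t$ are unchanged, every non-central arc of $\Gamma$ lifts to an arc of $\Gamma'$ at the same position, and each old central top arc $(n{-}i{+}1,n{+}i)$ lifts to the new central top arc $(n{-}i{+}1,n{+}i{+}2)$. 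The appended part $1$ places no bottom arc over either of the new middle vertices $n{+}1,n{+}2$, while the new innermost central top arc $(n{+}1,n{+}2)$ joins them into a fresh $\sigma$-stable segment. Consequently $\Gamma'=\iota(\Gamma)\sqcup\{n{+}1,n{+}2\}$ as graphs: no cycle is created, no existing component loses its $\sigma$-stability, and by Theorem~\ref{thm:main} the image indeed lies in $\boldsymbol{\eus F}_{n+1,k+1}$.

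Once the $\Phi_k$ are in place, their disjoint union is an injection $\boldsymbol{\eus F}_n\hookrightarrow \boldsymbol{\eus F}_{n+1}$ with image inside $\bigsqcup_{k\ge 2}\boldsymbol{\eus F}_{n+1,k}$, hence disjoint from $\boldsymbol{\eus F}_{n+1,1}$. To conclude $\boldsymbol{F}_{n+1}>\boldsymbol{F}_n$ it then suffices to exhibit a single element of $\boldsymbol{\eus F}_{n+1,1}$, for which I propose $\q^{\sf C}_{n+1}(n\mid n{+}1)$: case~{\sl\bfseries 2}{\sf (ii)} of the inductive procedure (applied with $a_1>b_1/2$) yields the reduction $\q^{\sf C}_{m}(m{-}1\mid m)\ard \q^{\sf C}_{m-1}(m{-}2\mid m{-}1)$ for $m\ge 2$, terminating in the Borel $\q^{\sf C}_1(\varnothing\mid 1)$ of $\mathfrak{sp}_2$, which is Frobenius. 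The expected main obstacle is the graph-level bookkeeping of the second paragraph: one must check not merely that a new $\sigma$-stable segment appears, but also that every previously existing component is transported intact under $\iota$, with no new incident edge and no cycle closing up.
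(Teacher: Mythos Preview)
Your proof is correct and follows the same construction as the paper: appending a trailing $1$ to the composition summing to $n$ is precisely the composition-level description of the paper's ``add two new vertices in the middle and connect them by an arc (on the appropriate side)''. Your graph bookkeeping via the $\sigma$-equivariant embedding $\iota$ makes explicit what the paper leaves to the reader, and your witness $\q^{\sf C}_{n+1}(n\mid n{+}1)\in\boldsymbol{\eus F}_{n+1,1}$ differs from the paper's choice $\un{a}=(2^{k})$, $\un{b}=(1,2^{k-1})$ (or $(1,2^{k})$), but your inductive-procedure verification is valid---the one caveat being that at $m=2$ you have $a_1=b_1/2$ rather than $a_1>b_1/2$, though the two reduction branches coincide there and still land in the Borel $\q^{\sf C}_1(\varnothing\mid 1)$.
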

\begin{proof}
For any $\q\in \boldsymbol{\eus F}_{n,k}$ ($k\ge 1$), we can add two new vertices in the middle of
$\Gamma^{\sf C}(\q)$ and connect them by an arc (on the appropriate side!). This yields an injective
mapping $\boldsymbol{\eus F}_{n,k}\to \boldsymbol{\eus F}_{n+1,k+1}$ for any $k\ge 1$ and thereby an injection $i_n: \boldsymbol{\eus F}_{n}\hookrightarrow \boldsymbol{\eus F}_{n+1}$.

Since $\boldsymbol{\eus F}_{n+1,1}$ does not intersect the image of $i_n$, the second assertion follows from the fact that $\boldsymbol{F}_{n+1,1}>0$ for any $n\ge 0$, see example below.
\end{proof}

{\it \bfseries Example.} We point out an explicit element 
$\q^{\sf C}_n(\un{a}{\mid}\un{b})\in\boldsymbol{\eus F}_{n,1}$. For $n=2k$, 
one takes $\un{a}=(2^{k})$  and $\un{b}=(1,2^{k-1})$. For $n=2k+1$, one takes
$\un{a}=(2^{k})$  and $\un{b}=(1,2^{k})$. For $n=4$, the meander graph is:
\begin{picture}(160, 20)(-10,0)
\setlength{\unitlength}{0.025in}
\multiput(10,3)(10,0){8}{\circle*{2}}

\put(15,5){\oval(10,5)[t]}
\put(35,5){\oval(10,5)[t]}
\put(55,5){\oval(10,5)[t]}
\put(75,5){\oval(10,5)[t]}

\put(25,1){\oval(10,5)[b]}
\put(45,1){\oval(10,5)[b]}
\put(65,1){\oval(10,5)[b]}
 
 \qbezier[20](45,-5),(45,0),(45,10)
\end{picture} .

\begin{prop}   \label{prop:stabilises}
{\sf (i)} \ For a fixed $m\in\BN$, the numbers $\boldsymbol{F}_{n,n-m}$ stabilise for $n\ge 2m+1$. In other words,
$\boldsymbol{F}_{n,n-m}=\boldsymbol{F}_{2m+1,m+1}$ for all $n\ge 2m+1$. 
\\ 
{\sf (ii)} \ Furthermore, $\boldsymbol{F}_{2m+1,m+1}=\boldsymbol{F}_{2m,m}+1$.
\end{prop}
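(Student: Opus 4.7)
The plan is to iterate the injection $\iota\colon\boldsymbol{\eus F}_{n, n-m}\hookrightarrow\boldsymbol{\eus F}_{n+1, n+1-m}$ from Lemma~\ref{lm:monotonicity}. After normalising (via Lemma~\ref{lm:simple-cons}) so that $\sum a_i = m$ and $\sum b_j = n$ for each Frobenius seaweed, $\iota$ simply appends ``$1$'' to $\un b$, which in $\Gamma^{\sf C}(\q)$ amounts to inserting the two new middle vertices as $\un b$-singletons joined by a fresh innermost central $\un a$-arc. Part~(i) reduces to showing that $\iota$ is bijective for $n \geq 2m+1$, equivalently that every $\q^{\sf C}_{n+1}(\un a\mid\un b)\in\boldsymbol{\eus F}_{n+1, n+1-m}$ has $b_t = 1$; part~(ii) will follow from identifying, at $n = 2m$, the elements of $\boldsymbol{\eus F}_{2m+1, m+1}$ missed by $\iota$.

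The heart of the argument is to establish $b_t = 1$. Suppose $B := b_t \geq 2$; the goal is to exhibit a $4$-cycle in the meander graph, contradicting Frobenius. I consider two complementary cases. In Case~A ($B \leq n+1-m$), the widest left-half $b_t$-arc lands on a vertex $n+2-B > m$ inside the central $\un a$-region, and the cycle $n+1 \to_b (n+2-B) \to_a (n+1+B) \to_b (n+2) \to_a (n+1)$ closes using the innermost central $\un a$-arc and the central arc $(n+2-B, n+1+B)$. In Case~B ($B \geq n+2-m$), the widest $b_t$-arc ends outside the central region, so I instead use the $j_0$-th $b_t$-arc with $j_0 := B - n + m - 1$; the hypothesis $n \geq 2m+1$ is exactly what ensures $j_0 \in [1, [B/2]-1]$, so that this arc exists, and its endpoints $m+1$ and $2n+2-B-m$ lie in the central region. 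One then obtains the cycle $m+1 \to_b (2n+2-B-m) \to_a (B+m+1) \to_b (2n+2-m) \to_a (m+1)$, using the widest central $\un a$-arc together with the central arc $(2n+2-B-m, B+m+1)$ and the right-half $j_0$-th $b_t$-arc. Since Cases~A and~B together exhaust $B\ge 2$, this completes~(i).

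For part~(ii), I run the same case analysis at $n = 2m$. Case~A still handles $B \leq m+1$, while in Case~B the condition $j_0 \leq [B/2]-1$ fails only at the odd boundary value $B = 2m+1$. Hence the unique element of $\boldsymbol{\eus F}_{2m+1, m+1}$ not in $\iota(\boldsymbol{\eus F}_{2m, m})$ must have $\un b = (2m+1)$. To pin down $\un a$, Lemma~\ref{lm:k-connected} yields a total of $2(2m+1) - (m+1) = 3m+1$ arcs in the graph; direct counting gives $2\sum_i[a_i/2] + (m+1) + 2\sum_j[b_j/2] = 2\sum_i[a_i/2] + 3m+1$ (using $\sum_j[b_j/2] = m$), forcing $\un a = (1^m)$. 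A direct trace of $\Gamma^{\sf C}_{2m+1}((1^m)\mid(2m+1))$ confirms Frobenius: it consists of the $m$ $\sigma$-stable paths $\{i,\,2m+2-i,\,2m+1+i,\,4m+3-i\}$ for $i = 1, \dots, m$ together with the central segment $\{m+1, 3m+2\}$. This single extra element yields $\boldsymbol{F}_{2m+1, m+1} = \boldsymbol{F}_{2m, m}+1$.

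The main obstacle is the Case~B cycle construction: one must identify the correct $b_t$-arc (the $j_0$-th, not the widest) and verify that all four vertices of the cycle lie in the prescribed $\un a$-regions, which hinges on the sharp inequality afforded by $n \geq 2m+1$. The rest of the argument, including the arc-count step in~(ii), is then routine.
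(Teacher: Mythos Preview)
Your proof is correct and follows the same strategy as the paper's: show that $b_t\ge 2$ forces a cycle in the meander graph, whence the injection $\iota$ is surjective in the required range. The only tactical difference is that the paper avoids your Case~A/B split by always using the \emph{innermost} $b_t$-arc, whose two endpoints automatically land in the central $\un a$-region as soon as $n\ge 2m+2$; on the other hand, your arc-count argument pinning down $\un a=(1^m)$ and the explicit verification in part~(ii) are more detailed than what the paper itself provides.
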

\begin{proof}
(i) \ Let  $\q=\q^{\sf C}_n(\un{a}{\mid}\un{b})\in \boldsymbol{\eus F}_{n,n-m}$. Then $\sum_{i=1}^s a_i=m$ 
and $\sum_{j=1}^t b_j=n$.
Consider the $n$-th vertex  of the graph (one that is closest to the $\sigma$-mirror). We are interested in 
$b_t$, the size of the last part of $\un{b}$, i.e., the part that contains the $n$-th vertex.
By the assumption, we have $n-m$ central arcs over the horizontal line. Therefore, 
if $n\ge 2m+2$ and $b_t\ge 2$, then the smallest arc corresponding to $b_t$ hits two vertices covered by 
central arcs above the line. And this produces a cycle in the graph! This contradiction shows that the 
only possibility is $b_t=1$. Then one can safely remove two central vertices from the graph and 
conclude that $\boldsymbol{F}_{n,n-m}=\boldsymbol{F}_{n-1,n-1-m}$ as long as $n\ge 2m+2$.
(The last step is opposite to one that is used in the proof of Lemma~\ref{lm:monotonicity}.)
\\ \indent
(ii) \ Again, for $\q=\q^{\sf C}_{2m+1}(\un{a}{\mid}\un{b})\in \boldsymbol{\eus F}_{2m+1,m+1}$, we consider $b_t$, 
the last coordinate of $\un{b}$. If $b_t=1$, then the central pair of vertices in $\Gamma^{\sf C}(\q)$ can 
be removed, which yields a seaweed in $\boldsymbol{\eus F}_{2m,m}$. Next, it is easily seen that if 
$b_t\in \{2,3,\dots,2m\}$, then $\Gamma^{\sf C}(\q)$ contains a cycle. Hence this is impossible. While 
for $b_t=2m+1$, one obtains a unique admissible possibility $\un{a}=(\underbrace{1,1,\dots,1}_{m})$. 
\end{proof}

\begin{rema}   Using a similar analysis, one can  show that 
$\boldsymbol{F}_{2m,m}=\boldsymbol{F}_{2m-1,m-1}+3$, if $m\ge 3$.
\end{rema}

\begin{rmk}
Our stabilisation result for $\boldsymbol{F}_{n,n-m}$ can be compared with~\cite{duyu}, where Duflo and Yu consider a partition of 
the set of standard Frobenius seaweeds in $\sln$ into classes and study the asymptotic behaviour of 
the cardinality of these classes as $n$ tends to infinity.  Let $p(\un{a})$ be the number of nonzero parts 
of the composition $\un{a}$ and let $\tilde F_{n,p}$ be the number of the standard Frobenius seaweeds 
$\q^{\sf A}(\un{a}{\mid}\un{b})\cap\sln$ such that $p(\un{a})+p(\un{b})=p$. By~\cite[Theorem\,1.1(b)]{duyu}, if $n$ is sufficiently large, then $\tilde F_{n,n+1-t}$ is a polynomial in $n$ of degree
$[t/2]$, with positive rational coefficients.
\end{rmk}

It seems that $\boldsymbol{\eus F}_{n,1}$ is the most interesting part of the symplectic Frobenius 
seaweeds. Recall from Section~\ref{sect:generalities} that to any standard seaweed $\q\subset\spn$ 
one can associate a ``symmetric'' seaweed $\tilde\q\subset \gltn$ such that $\q=\tilde\q\cap\spn$. In this 
context, we also set $\tilde\q_0=\tilde\q\cap\sltn$.

\begin{prop}   \label{prop:f_(n,1)}
{\sf (i)} \  If\/ $\q\in \boldsymbol{\eus F}_{n,1}$, then $\ind\tilde\q=1$, hence $\tilde\q_0$ is a Frobenius 
seaweed in $\sltn$.
\\ 
{\sf (ii)} \  There is an injective map $\boldsymbol{\eus F}_{n,1}\to \boldsymbol{\eus F}_{n+1,1}$, 
which is not onto if $n\ge 2$.
\end{prop}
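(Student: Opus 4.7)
\textbf{Part (i)} will follow directly from earlier results. Since $\Gamma^{\sf A}(\tilde\q)=\Gamma^{\sf C}_n(\q)$ by the very definition of the type-\textsf{C} meander graph, the assumption $\q\in\boldsymbol{\eus F}_{n,1}$ combined with Theorem~\ref{thm:main} and Lemma~\ref{lm:k-connected} forces $\Gamma^{\sf A}(\tilde\q)$ to be a single $\sigma$-stable segment (no cycles, exactly one connected component). The type-\textsf{A} formula \eqref{eq:index-A} then gives $\ind\tilde\q=2\cdot 0+1=1$, and Remark~\ref{rem:ind-gl-sl} yields $\ind\tilde\q_0=\ind\tilde\q-1=0$.

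For \textbf{Part (ii)} the plan is to write the injection explicitly. By Lemma~\ref{lm:simple-cons}, every $\q=\q^{\sf C}_n(\un{a}|\un{b})\in\boldsymbol{\eus F}_{n,1}$ admits a representative with $\sum a_i=n-1$ and $\sum b_j=n$, so the unique central arc sits above the horizontal line. Define
\[
\Phi(\q)\ :=\ \q^{\sf C}_{n+1}\bigl((\un{a},2)\,\big|\,\un{b}\bigr),
\]
obtained by appending a part of size $2$ at the end of $\un{a}$. To see that $\Phi(\q)\in\boldsymbol{\eus F}_{n+1,1}$, I would compare the new symmetric compositions $\un{\tilde a}'=(a_1,\ldots,a_s,2,2,a_s,\ldots,a_1)$ and $\un{\tilde b}'=(b_1,\ldots,b_t,2,b_t,\ldots,b_1)$ against $\un{\tilde a},\un{\tilde b}$; a vertex-by-vertex inspection shows that $\Gamma^{\sf C}(\Phi(\q))$ is obtained from $\Gamma^{\sf C}(\q)$ by replacing the central top arc between vertices $n$ and $n+1$ with a length-three path through two newly inserted vertices (top arc, new central bottom arc, top arc), all remaining arcs being preserved under an obvious re-indexing. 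Since subdividing a single edge preserves both the tree structure and the $\sigma$-symmetry, $\Gamma^{\sf C}(\Phi(\q))$ is again a single $\sigma$-stable segment with exactly one central arc, and Theorem~\ref{thm:main} forces $\ind\Phi(\q)=0$.

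Injectivity of $\Phi$ is then immediate: the trailing $2$ always sits on the side with $\sum=n+1$, a property invariant under the $\sim$-equivalence, so $(\un{a},\un{b})$ can be unambiguously recovered from $\Phi(\q)$ by dropping that trailing $2$. For the failure of surjectivity when $n\ge 2$, I would exhibit the explicit seaweed $\q^{\sf C}_{n+1}((n)|(n+1))$: its meander graph is an alternating top/bottom zigzag visiting every vertex once, forming a single $\sigma$-stable segment with one central arc and no cycles, so it lies in $\boldsymbol{\eus F}_{n+1,1}$. For $n\ge 3$, neither $(n)$ nor $(n+1)$ ends in $2$, so this seaweed cannot be in the image of $\Phi$; when $n=2$, the only composition ending in $2$ would be $\un{a}'=(2)$, and the hypothetical preimage $\q^{\sf C}_2(\varnothing|(3))$ is not a valid seaweed since $\sum b_j=3>2=n$.

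The main technical obstacle is the explicit graph-surgery comparison justifying the subdivision picture for $\Phi(\q)$; once that is in hand, both the injectivity and the non-surjectivity reduce to routine combinatorial bookkeeping against Theorem~\ref{thm:main} and Lemma~\ref{lm:k-connected}.
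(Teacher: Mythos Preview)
Your proof is correct and follows the same route as the paper: Part~(i) via Lemma~\ref{lm:k-connected} and Eq.~\eqref{eq:index-A}, and Part~(ii) via the explicit injection $\q^{\sf C}_n(\un{a}\!\mid\!\un{b})\mapsto \q^{\sf C}_{n+1}((\un{a},2)\!\mid\!\un{b})$, whose image you (and the paper) characterise as the seaweeds in $\boldsymbol{\eus F}_{n+1,1}$ whose sum-$(n+1)$ composition ends in~$2$. One small remark: your separate treatment of $n=2$ is unnecessary, since for every $n\ge 2$ the sum-$(n+1)$ composition of $\q^{\sf C}_{n+1}((n)\!\mid\!(n+1))$ is the single part $(n+1)$ with $n+1\ge 3$, which already fails the ``ends in $2$'' criterion; the detour through the ill-formed ``preimage $\q^{\sf C}_2(\varnothing\!\mid\!(3))$'' can be dropped.
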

\begin{proof}
(i) \  If $\q\in\boldsymbol{\eus F}_{n,1}$, then $\Gamma^{\sf C}(\q)$ 
and thereby $\Gamma^{\sf A}(\tilde\q)$ consists of a sole segment (Lemma~\ref{lm:k-connected}). By Eq.~\eqref{eq:index-A}, we have $\ind\tilde\q=1$ and therefore $\ind\tilde\q_0=\ind\tilde\q-1=0$.
\par (ii) \ 
If $\q=\q^{\sf C}_{n}(\un{a}{\mid}\un{b})\in \boldsymbol{\eus F}_{n,1}$, then $\sum_{i=1}^s a_i=n-1$ and 
$\sum_{j=1}^t b_j=n$. We associate to it a seaweed $\hat\q\in\boldsymbol{\eus F}_{n+1,1}$ as follows.
Set $\hat\q=\q^{\sf C}_{n+1}(\un{\hat a}{\mid}\un{b})$, where $\un{\hat a}=(a_1,\dots,a_s,2)$.
Note that $\Gamma^{\sf C}_{n}(\un{a}{\mid}\un{b})$ has one central arc above the horizontal line,
while $\Gamma^{\sf C}_{n+1}(\un{\hat a}{\mid}\un{b})$ has one central arc below. The following  is a 
graphical illustration of the transform $\q \mapsto \hat\q$:

\begin{center}
\begin{picture}(60,20)(-10,3)
\setlength{\unitlength}{0.025in}
\multiput(0,3)(10,0){2}{\circle*{2}}

\put(-12,3){$\dots$}
\put(15,3){$\dots$}
\put(5,5){\oval(10,5)[t]}

\put(15,1){\oval(10,5)[lb]}
\put(-5,1){\oval(10,5)[rb]}

\qbezier[15](5,-6),(5,3),(5,12)
\end{picture}   
$\mapsto$
\begin{picture}(50,20)(-30,3)
\setlength{\unitlength}{0.025in}
\multiput(0,3)(10,0){4}{\circle*{2}}

\put(-12,3){$\dots$}
\put(35,3){$\dots$}
\put(5,5){\oval(10,5)[t]}
\put(25,5){\oval(10,5)[t]}

\put(15,1){\oval(10,5)[b]}
\put(35,1){\oval(10,5)[lb]}
\put(-5,1){\oval(10,5)[rb]}

\qbezier[15](15,-6),(15,3),(15,12)
\end{picture} 
\end{center}
\vskip1ex
This provides a bijection between $\boldsymbol{\eus F}_{n,1}$ and the seaweeds in
$\boldsymbol{\eus F}_{n+1,1}$ whose last part of the composition that sums up to $n+1$ equals 2.
If $n+1\ge 3$, then there are seaweeds in $\boldsymbol{\eus F}_{n+1,1}$ such that the above-mentioned last part is bigger than 2. Hence $\boldsymbol{F}_{n,1}<\boldsymbol{F}_{n+1,1}$.
\end{proof}

\begin{rmk}  \label{rem:frobC&A}
Another curious observation is that $\boldsymbol{\eus F}_{n,1}$ and $\boldsymbol{\eus F}_{n,2}$ are related 
to certain Frobenius seaweeds in $\sln$: 
\par (i) Suppose that $\q\in\boldsymbol{\eus F}_{n,1}$. Let us remove the only central arc in $\Gamma^{\sf C}(\q)$ and take the remaining left hand half of the graph as it is. It is a {\bf connected} type-{\sf A} meander graph with $n$ vertices. Therefore, it represents a seaweed of index $1$ in $\gln$ 
(=\,Frobenius seaweed in $\sln$). Formally, if $\q=\q^{\sf C}_n(\un{a}{\mid}\un{b})$, with $\sum a_i=n-1$ and $\sum b_j=n$, then we set $\q'=\q^{\sf A}(\un{a}'{\mid}\un{b})\subset\sln$, where
$\un{a}'=(\un{a},1)$. This yields a bijection between $\boldsymbol{\eus F}_{n,1}$ and the Frobenius seaweeds of
$\sln$ such that the last part of $\un{a}'$ equals $1$.
\par (ii) Suppose that $\q\in\boldsymbol{\eus F}_{n,2}$. Let us remove the two central arcs and 
take the remaining left hand half. We obtain a graph with $n$ vertices and two connected components
(segments). Joining the last two ``lonely'' vertices by an arc, we get a {\bf connected} type-{\sf A} meander 
graph. Formally, if 
$\q=\q^{\sf C}_n(\un{a}{\mid}\un{b})$, with $\sum a_i=n-2$ and $\sum b_j=n$, then we set 
$\q'=\q^{\sf A}(\un{a}'{\mid}\un{b})\subset\sln$, where $\un{a}'=(\un{a},2)$. Again, this yields 
a bijection between $\boldsymbol{\eus F}_{n,2}$ and 
the Frobenius seaweeds of $\sln$ such that the last part of $\un{a}'$ equals $2$.

Unfortunately, such a nice relationship does not extend to $\boldsymbol{\eus F}_{n,3}$.
\end{rmk}

We present the table of numbers $\boldsymbol{F}_{n,k}$ for $n\le 7$.
\begin{table}[htb]
\begin{center}
\begin{tabular}{c|ccccccc||c}
\begin{picture}(15,15)
\setlength{\unitlength}{0.015in}
\put(0,2){\small $n$}
\put(10,8){\small $k$}
\put(1,14){\line(1,-1){16}}
\end{picture}
& 1& 2& 3& 4& 5& 6 & 7 & $\Sigma=\boldsymbol{F}_n$\\ \hline
1 & 1 & -& -& -& -& - & - &1\\
2 & 1 & 1 & -& -& -& - & - & 2\\
3 & 2 & 2 & 1 & -& -& - & - &5 \\
4 & 4 & 4 & 2 & 1 & -& - & - &11\\
5 & 8 & 10 & 5 & 2 & 1 & - & - &26\\
6 & 15 & 20 & 13 & 5 & 2& 1 &- & 56\\
7 & {28} & {44} & 28 & 14 & 5 & 2 & 1 &  {122} \\ \hline
\end{tabular}
\end{center}
\vskip1ex

\caption{The numbers $\boldsymbol{F}_{n,k}$ for $n\le 7$}   \label{table-1}
\end{table}

\noindent
Note that the values $14,5,2,1$ in the 7-th row are stable in the sense of Proposition~\ref{prop:stabilises}(i).
Using preceding information, we can also compute the next stable value:
\[
   \boldsymbol{F}_{9,5}=\boldsymbol{F}_{8,4}+1=(\boldsymbol{F}_{7,3}+3)+1=32 .
\]

\section{On meander graphs for the odd orthogonal Lie algebras}
\label{sect:odd-orth}

\noindent
As in the case of $\spn$, the standard parabolic subalgebras of $\sono$ are parametrised
by the compositions $\un{a}=(a_1,\dots,a_s)$ such that $\sum a_i\le n$. For instance, if
$\p^{\sf B}_n(\un{a})$ is the standard parabolic subalgebra corresponding to $\un{a}$,
then a Levi subalgebra of it is of the form $\gt{gl}_{a_1}{\oplus}\dots\oplus\gt{gl}_{a_s}\oplus
\mathfrak{so}_{2(n-\sum a_i)+1}$. Therefore,
the standard seaweed subalgebras of $\sono$ are also parametrised
by the pairs of compositions $\un{a}$, $\un{b}$ such that $\sum a_i\le n$ and $\sum b_j\le n$,
see~\cite[Section\,5]{Dima01}.  Furthermore, the inductive procedure for computing the index of standard
seaweeds (see Section~\ref{sect:main}, Step {\sl\bfseries 2.}), which reduces the case of arbitrary seaweeds to parabolic subalgebras, also remains the same~\cite[Theorem\,5.2]{Dima01}.

This means that if the formula for the index of parabolic subalgebras of $\sono$ in terms of
$\un{a}$ also remains the "same" as in the symplectic case, then one can use our type-{\sf C} meander 
graphs in type $\GR{B}{n}$ as well. Although, there are only partial results on the index of parabolic
subalgebras of $\sono$ in \cite[Section\,6]{Dima01}, one can use the  general Tauvel-Yu-Joseph formula, 
see~\cite[Conj.\,4.7]{ty-AIF} and \cite[Section\,8]{jos}. Namely, if $\q=\q(S,T)$ is the seaweed corresponding to the subsets $S,T\subset \Pi$, then 
\beq    \label{eq:tau-yu}
   \ind\q=\rk \g +\dim E_S +\dim E_T  -2\dim (E_{S}+E_T) .
\eeq
Here $\dim E_T=\# {\mathcal K}(T)$ is the cardinality of the cascade of strongly orthogonal roots
in the Levi subalgebra of $\g$ corresponding to $T$, see \cite{ty-AIF} for the details.
Our observation is that it easily implies that, for any composition $\un{a}$, one has
\beq    \label{eq:ind-parab-B}
    \ind \p^{\sf B}_n(\un{a})=\left[ \frac{a_1}{2} \right ]+\ldots+\left[ \frac{a_s}{2} \right ]+(n-\sum_{i=1}^s a_i)
    =\ind\p^{\sf C}_n(\un{a}) .
\eeq
Indeed, for the parabolic subalgebras, we may assume that $S=\Pi$, and since $\ind\be=0$ for the series $\GR{B}{n}$, we have $\dim E_\Pi=\rk\g$. Therefore,  
$\ind \p^{\sf B}_n(\un{a})=\dim E_T=\# {\mathcal K}(T)$. 
As already noticed before, for  
$\p^{\sf B}_n(\un{a})$, we have $\el=\gt{gl}_{a_1}{\oplus}\dots\oplus\gt{gl}_{a_s}\oplus
\mathfrak{so}_{2(n-\sum a_i)+1}$. As is well-known, the cardinality of the cascade of strongly orthogonal roots in  $\mathfrak{gl}_a$ (resp. $\mathfrak{so}_{2n+1}$) equals $[a/2]$ (resp. $n$), see \cite[Sect.\,2]{jos77}. Therefore, the cardinality of the cascade in the above $\el$ is given by the middle term 
in~\eqref{eq:ind-parab-B}.

There is another interesting formula for the index of a parabolic subalgebra, 
which generalises the above observation.   

\begin{thm}    \label{thm:ind-parab-gen}
Let $\g$ be a simple Lie algebra such that $\ind \be=0$. Let $\p\subset\g$ be a parabolic subalgebra,
with a Levi subalgebra $\el$. If\/ $\be(\el)$ is a Borel subalgebra of\/ $\el$ and
$\ut(\el)=[\be(\el),\be(\el)]$,  then 
\beq    \label{eq:teor}
\ind \p=\ind \ut(\el)  =\rk\el-\ind \be(\el) =\rk\g-\ind \be(\el) .
\eeq
In particular, $\ind\p=0$ if and only if\/ $\ut(\el)=0$, i.e., $\p=\be$.
\end{thm}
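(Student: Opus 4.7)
The strategy is to chain applications of the Tauvel--Yu--Joseph formula \eqref{eq:tau-yu}, using the fact that the parabolic $\p\supset\be$ is itself the seaweed $\q(S,\Pi)$, with $S\subset\Pi$ the simple roots of the semisimple part of $\el$. Since $E_S\subset E_\Pi$, formula \eqref{eq:tau-yu} reduces to $\ind\p=\rk\g+\dim E_S-\dim E_\Pi$, and applying the same formula to $\be=\q(\emptyset,\Pi)$ gives $\ind\be=\rk\g-\dim E_\Pi$. The hypothesis $\ind\be=0$ therefore forces $\dim E_\Pi=\rk\g$, whence $\ind\p=\dim E_S=\#\mathcal K(S)$.

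Next I would identify $\#\mathcal K(S)$ with both $\rk\el-\ind\be(\el)$ and $\ind\ut(\el)$. For the first, I would apply \eqref{eq:tau-yu} inside $\el$ (decomposed into its centre and simple factors), noting that the cascade of $\el$ is precisely $\mathcal K(S)$; the Borel $\be(\el)$ then satisfies $\ind\be(\el)=\rk\el-\#\mathcal K(S)$, and since $\rk\el=\rk\g$ this gives $\ind\p=\rk\el-\ind\be(\el)=\rk\g-\ind\be(\el)$. For the second, I would invoke Joseph's classical theorem that the index of the nilradical of a Borel in a semisimple algebra equals the cascade cardinality; applied factor by factor to $[\el,\el]$ (the centre contributes nothing to $\ut(\el)=[\be(\el),\be(\el)]$), this yields $\ind\ut(\el)=\#\mathcal K(S)$, completing the chain of equalities.

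Finally, $\ind\p=0$ forces $\ind\ut(\el)=0$; since any nonzero nilpotent Lie algebra has a nontrivial centre, and the centre is always contained in every coadjoint stabiliser, the index of a nonzero nilpotent algebra is positive. Hence $\ut(\el)=0$, i.e.\ $\el=\te$ and $\p=\be$. The only delicate point I foresee is the cascade identification in paragraph two: one must verify that $E_S$ computed in $\g$ coincides with the span of the cascade $\mathcal K(S)$ taken intrinsically in $\el$, and that both \eqref{eq:tau-yu} and Joseph's identity transfer cleanly from simple to reductive $\el$ by summing over simple factors (with the abelian centre handled separately). All three displayed equalities hinge on this identification, but it is essentially bookkeeping once the conventions are matched.
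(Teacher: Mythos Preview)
Your proposal is correct and follows essentially the same route as the paper's outline: reduce via \eqref{eq:tau-yu} together with the hypothesis $\ind\be=0$ to $\ind\p=\#\mathcal{K}(S)$, then identify this cascade cardinality with $\ind\ut(\el)$ via Joseph's result in \cite{jos77}. The only cosmetic difference is in obtaining the middle equality $\#\mathcal{K}(S)=\rk\el-\ind\be(\el)$: you re-apply \eqref{eq:tau-yu} inside $\el$ (summed over simple factors), whereas the paper simply quotes the general identity $\rk\el=\ind\be(\el)+\ind\ut(\el)$ for reductive $\el$; the two arguments are equivalent.
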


\noindent
{\it\bfseries Outline of the proof.}
Again, under the assumption that $\ind\be=0$, we have $S=\Pi$, $\dim E_\Pi=\rk\g$, and $\el$ 
is determined by $T$. Hence \eqref{eq:tau-yu} implies that
$\ind\p=\dim E_T=\# {\mathcal K}(T)$. It is implicit in \cite[2.6]{jos77} that 
$\# {\mathcal K}(T)=\ind \ut(\el)$,
and the second 
equality in \eqref{eq:teor} is a consequence of the fact that $\rk\el=\ind \be(\el)+\ind \ut(\el) $ for any reductive Lie algebra $\el$. 
A more detailed explanation  and some applications of the theorem will appear elsewhere. \hfil \qed

Recall that, for a simple Lie algebra $\g$, $\ind\be=0$ if and only if $\g\ne \GR{A}{n},\GR{D}{2n+1},
\GR{E}{6}$.  

{\bf Conclusion}. {\sf 1)} \ Given a standard seaweed $\q=\q^{\sf B}_n(\un{a}{\mid}\un{b})\subset\sono$, we 
can draw exactly the same meander graph as in type {\sf C} (with $2n$ vertices) 
and use exactly the same topological formula (Theorem~\ref{thm:main}) to compute the index of $\q$.

{\sf 2)} \ Using our type-{\sf C} meander graphs, we can establish a bijection between the standard Frobenius seaweeds for the symplectic and odd orthogonal Lie algebras of the same rank. It would be very interesting to realise whether there is a deeper reason for such a bijection.

{\sf 3)} \  For the even-dimensional orthogonal Lie algebras (type $\GR{D}{n}$), there is a similar inductive procedure that reduces the problem of computing the index of arbitrary seaweeds to parabolic subalgebras.
However, $\ind\be=1$ for $\GR{D}{2n+1}$ and Theorem~\ref{thm:ind-parab-gen} does not apply.
Furthermore, although $\ind\be=0$ for $\GR{D}{2n}$,  
the general formula for the index of parabolic subalgebras cannot be expressed nicely in terms of compositions. Of course, the reason is that the Dynkin diagram has a branching node!

\end{document}